\theoremstyle{plain}
\newtheorem{theorem}{Theorem}[section]
\newtheorem{corollary}[theorem]{Corollary}
\newtheorem{prop}[theorem]{Proposition}
\newtheorem{lemma}[theorem]{Lemma}
\theoremstyle{definition}
\newtheorem{remark}[theorem]{Remark}
\newtheorem{example}[theorem]{Example}
\newtheorem{examples}[theorem]{Examples}
\newtheorem{definition}[theorem]{Definition}
 \DeclareMathOperator{\re}{Re\,}
 \DeclareMathOperator{\dist}{dist\,}
\newcommand{\R}{\mathbb{R}}
\newcommand{\norm}[1]{\ensuremath{\lVert#1\rVert}}
\newcommand{\nor}[1]{\ensuremath{\left\|#1\right\|}}
\newcommand{\abs}[1]{\ensuremath{\lvert#1\rvert}}
\newcommand{\Abs}[1]{\ensuremath{\left|#1\right|}}
\newcommand{\eps}{\varepsilon}
\renewcommand{\leq}{\leqslant}
\renewcommand{\geq}{\geqslant}
\renewcommand{\ge}{\geqslant}
\newcommand{\fr}[2]{\frac{#1}{#2}}
\newcommand{\lf}{\left\{ }
\newcommand{\rt}{\right\} }
\begin{document}
\title{Two refinements of the Bishop-Phelps-Bollob\'{a}s modulus}

\author[Chica]{Mario Chica}
\author[Kadets]{Vladimir Kadets}
\author[Mart\'{\i}n]{Miguel Mart\'{\i}n}
\author[Mer\'{\i}]{Javier Mer\'{\i}}
\author[Soloviova]{Mariia Soloviova}

\address[Chica]{Departamento de An\'{a}lisis Matem\'{a}tico \\ Facultad de
 Ciencias \\ Universidad de Granada \\ 18071 Granada, Spain}
\email{mcrivas@ugr.es}

\address[Kadets]{Department of Mechanics and Mathematics,
Kharkiv National University, pl.~Svobody~4,
\newline
61077~Kharkiv, Ukraine}
\email{vova1kadets@yahoo.com}

\address[Mart\'{\i}n]{Departamento de An\'{a}lisis Matem\'{a}tico \\ Facultad de
 Ciencias \\ Universidad de Granada \\ 18071 Granada, Spain
\newline
\href{http://orcid.org/0000-0003-4502-798X}{ORCID: \texttt{0000-0003-4502-798X} }
 }
\email{mmartins@ugr.es}

\address[Mer\'{\i}]{Departamento de An\'{a}lisis Matem\'{a}tico \\ Facultad de
 Ciencias \\ Universidad de Granada \\ 18071 Granada, Spain}
\email{jmeri@ugr.es}

\address[Soloviova]{Department of Mechanics and Mathematics,
Kharkiv National University, pl.~Svobody~4,
\newline
61077~Kharkiv, Ukraine}
\email{5\_11\_16@mail.ru}

\thanks{The work of the first, third and fourth authors has been partially supported by Spanish MICINN and FEDER project no.\ MTM2012-31755 and by Junta de Andaluc\'{\i}a and FEDER grants FQM-185 and P09-FQM-4911.}

 \subjclass[2010]{Primary: 46B04}
 \keywords{Banach space; Bishop-Phelps theorem; approximation; uniformly non-square spaces}

\begin{abstract}
Extending the celebrated result by Bishop and Phelps that the set of norm attaining functionals is always dense in the topological dual of a Banach space, Bollob\'{a}s proved the nowadays known as the Bishop-Phelps-Bollob\'{a}s theorem, which allows to approximate at the same time a functional and a vector in which it almost attains the norm. Very recently, two Bishop-Phelps-Bollob\'{a}s moduli of a Banach space have been introduced [\emph{J.\ Math.\ Anal.\ Appl.}~412 (2014), 697--719] to measure, for a given Banach space, what is the best possible Bishop-Phelps-Bollob\'{a}s theorem in this space. In this paper we present two refinements of the results of that paper. On the one hand, we get a sharp general estimation of the Bishop-Phelps-Bollob\'{a}s modulus as a function of the norms of the point and the functional, and we also calculate it in some examples, including Hilbert spaces. On the other hand, we relate the modulus of uniform non-squareness with the Bishop-Phelps-Bollob\'{a}s modulus obtaining, in particular, a simpler and quantitative proof of the fact that a uniformly non-square Banach space cannot have the maximum value of the Bishop-Phelps-Bollob\'{a}s modulus.
\end{abstract}

\date{October 27th, 2014}

\maketitle

\section{Introduction}
It is a celebrated result of the geometry of Banach spaces that the set of norm-attaining functionals is always dense in the topological dual of a Banach space (i.e.\ the classical Bishop-Phelps theorem of 1961 \cite{BishopPhelps61}). A refinement of this theorem, nowadays known as the Bishop-Phelps-Bollob\'{a}s theorem \cite{Bollobas}, was proved by B.~Bollob\'{a}s and allows to approximate at the same time a functional and a vector in which it almost attains the norm. Very recently, two moduli have been introduced \cite{C-K-M-M-R} which measure, for a given Banach space, what is the best possible Bishop-Phelps-Bollob\'{a}s theorem in this space. We need some notation. Given a (real or complex) Banach space $X$, $X^*$ denotes the (topological) dual of
$X$. We write $B_X$ and $S_X$ to denote respectively the closed unit ball and the unit
sphere of the space. We consider the set in $B_X\times B_{X^*}$ given by
$$
\Pi(X):=\bigl\{(x,x^*)\in X\times X^*\,:\,
\|x\|=\|x^*\|=x^*(x)=1\bigr\}.
$$

\begin{definition}[Bishop-Phelps-Bollob\'{a}s moduli, \cite{C-K-M-M-R}] \label{def1.2bpb-mod}$ $\newline Let $X$ be a Banach space. The \emph{Bishop-Phelps-Bollob\'{a}s modulus} of $X$ is the function $\Phi_X:(0,2)\longrightarrow \R^+$ such that given $\delta\in (0,2)$, $\Phi_X(\delta)$ is the infimum of those $\eps>0$ satisfying that for every $(x,x^*)\in B_X\times B_{X^*}$ with $\re x^*(x)>1-\delta$, there is $(y,y^*)\in \Pi(X)$ with
$\|x-y\|<\eps$ and $\|x^*-y^*\|<\eps$.

The \emph{spherical Bishop-Phelps-Bollob\'{a}s modulus} of $X$ is the function $\Phi^S_X:(0,2)\longrightarrow \R^+$ such that given $\delta\in (0,2)$, $\Phi^S_X(\delta)$ is the infimum of those $\eps>0$ satisfying that for every $(x,x^*)\in S_X\times S_{X^*}$ with $\re x^*(x)>1-\delta$, there is $(y,y^*)\in \Pi(X)$ with
$\|x-y\|<\eps$ and $\|x^*-y^*\|<\eps$.
\end{definition}

If we equip $X\times X^*$ with the metric given by
$$
d_\infty\big((x,x^*),(y,y^*)\big)=\max\{\|x-y\|,\|x^*-y^*\|\}, \,\,  (x,x^*), (y,y^*)\in X\times X^*,
$$
it is clear that $\Phi_X(\delta)$ and $\Phi_X^S(\delta)$ are the Hausdorff distances to $\Pi(X)$ of, respectively, the sets
\begin{equation*}
\bigl\{(x,x^*)\in B_X\times B_{X^*}\,:\, \re
x^*(x)>1-\delta \bigr\} \ \text{ and } \ \bigl\{(x,x^*)\in S_X\times S_{X^*}\,:\, \re
x^*(x)>1-\delta \bigr\}.
\end{equation*}

Observe that, clearly, $\Phi_X^S(\cdot)\leq \Phi_X(\cdot)$ and there are examples in which the inequality is strict (see \cite[\S4]{C-K-M-M-R}. More interesting properties of both moduli can be found in the cited paper \cite{C-K-M-M-R}, where we refer for background.

One of the main results of \cite{C-K-M-M-R} states that there is a common upper bound for $\Phi_X(\cdot)$ (and so for $\Phi_X^S(\cdot)$) for all Banach spaces which is actually sharp. Namely, it is shown that for every Banach space $X$ and every $\delta\in (0,2)$ one has
$\Phi_X(\delta)\leq \sqrt{2\delta}$. In other words, this leads to the following improved version of the Bishop-Phelps-Bollob\'{a}s theorem.

\begin{prop}[Sharp version of the Bishop-Phelps-Bollob\'{a}s theorem, \mbox{\cite[Corollary~2.4]{C-K-M-M-R}}] \label{Prop1.2}
Let $X$ be a Banach space and $0<\eps<2$. Suppose that $x\in B_X$ and $x^*\in B_{X^*}$ satisfy $\re x^*(x) > 1- \eps^2/2$. Then, there exists $(y,y^*)\in \Pi(X)$ such that $\|x-y\|<\eps$ and $\|x^*-y^*\|<\eps$.
\end{prop}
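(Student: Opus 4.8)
The plan is to deduce the statement from the Br{\o}ndsted--Rockafellar theorem, the classical refinement of the Bishop--Phelps theorem, applied to the indicator function $\varphi$ of $B_X$ (so $\varphi\equiv 0$ on $B_X$ and $\varphi\equiv+\infty$ elsewhere), which is proper, convex and lower semicontinuous. Two elementary facts organize everything. First, for $y\in B_X$ with $\norm{y}<1$ one has $\partial\varphi(y)=\{0\}$, while for $y\in S_X$ the subdifferential $\partial\varphi(y)$ is exactly the set of nonnegative multiples of functionals $y^*\in S_{X^*}$ with $y^*(y)=1$; hence any \emph{nonzero} $y^*\in\partial\varphi(y)$ produces a pair $(y,y^*/\norm{y^*})\in\Pi(X)$. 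Second, $x^*$ lies in the $\eta$-subdifferential $\partial_\eta\varphi(x)$ precisely when $\eta\geq\norm{x^*}-\re x^*(x)$. Writing $\alpha:=1-\re x^*(x)$ and $\mu:=1-\norm{x^*}$, we have $0\leq\mu\leq\alpha<\eps^2/2$ and $x^*\in\partial_{\eta_0}\varphi(x)$ with $\eta_0:=\norm{x^*}-\re x^*(x)=\alpha-\mu$.

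Suppose first $\mu\leq 2-\eps$. I would choose a scale $\lambda$ in the interval $\bigl(\eta_0/\eps\,,\,\min\{(\eps-\mu)/2\,,\,\norm{x^*}\}\bigr)$ and apply Br{\o}ndsted--Rockafellar to $\varphi$ at $x$ with error $\eta_0$ and scale $\lambda$, obtaining $y\in B_X$ and $y^*\in\partial\varphi(y)$ with $\norm{x-y}\leq\eta_0/\lambda$ and $\norm{x^*-y^*}\leq\lambda$. Because $\lambda<\norm{x^*}$, the functional $y^*$ cannot be $0$, so $\norm{y}=1$ and $z^*:=y^*/\norm{y^*}$ satisfies $(y,z^*)\in\Pi(X)$. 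The left endpoint gives $\norm{x-y}\leq\eta_0/\lambda<\eps$; and since $\norm{y^*}\in[\norm{x^*}-\lambda,\norm{x^*}+\lambda]$ we get $\bigl|\norm{y^*}-1\bigr|\leq\mu+\lambda$, hence $\norm{x^*-z^*}\leq\norm{x^*-y^*}+\bigl|\norm{y^*}-1\bigr|\leq\mu+2\lambda<\eps$ by the right endpoint. The one nontrivial point is that the chosen interval is nonempty: $\eta_0/\eps<(\eps-\mu)/2$ rearranges to $2\alpha<\eps^2+\mu(2-\eps)$, which holds since $2\alpha<\eps^2$ and $\mu(2-\eps)\geq0$; and $\eta_0/\eps<\norm{x^*}=1-\mu$ is exactly where the standing assumption $\mu\leq 2-\eps$ (again together with $\alpha<\eps^2/2$) is used.

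If instead $\mu>2-\eps$ --- which forces $\eps>1$ and $\norm{x^*}<\eps-1$ --- no variational step is needed, because in the $X^*$-coordinate even the crude bound $\norm{x^*}+1$ already lies below $\eps$: by the Bishop--Phelps theorem the support points of $B_X$ are dense in $S_X$, and since $\dist(x,S_X)=1-\norm{x}\leq\alpha<\eps^2/2<\eps$ there is $(y,y^*)\in\Pi(X)$ with $\norm{x-y}<\eps$; then $\norm{x^*-y^*}\leq\norm{x^*}+\norm{y^*}=2-\mu<\eps$.

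The real content is the balance in the second paragraph. The ``gain'' $\eta_0=\alpha-\mu$ in the point estimate (which is present precisely when $x^*$ lies strictly inside $B_{X^*}$) is exactly what compensates the ``loss'' $\mu$ paid when pushing $y^*$ back onto $S_{X^*}$; it is this cancellation that upgrades the weaker threshold of a bare support-cone argument to the sharp value $\eps^2/2$. Everything else --- identifying $\partial\varphi$ and $\partial_\eta\varphi$, and the short case split on $\mu$ needed to guarantee $y^*\neq0$ --- is routine.
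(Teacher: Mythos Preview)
Your argument is correct. The paper does not reprove Proposition~\ref{Prop1.2} itself (it is quoted from \cite{C-K-M-M-R}), but the proof of the generalization Theorem~\ref{thm:cota-general} rests on exactly the same variational principle you invoke: Lemma~\ref{lemma:corolario2.2-phelps} (Phelps' support-cone lemma) is precisely Br{\o}ndsted--Rockafellar specialized to the indicator of a closed bounded convex set, and the paper also obtains the sharp constant by choosing the scale parameter so that the two error terms balance, then normalizing the output functional. The bookkeeping differs slightly: the paper first rescales $x^*$ to the sphere and applies the lemma with $\eta=(\mu\theta-1+\delta)/\theta$ and a single explicit $k$, whereas you keep $x^*$ as is, use the tighter error $\eta_0=\|x^*\|-\re x^*(x)$, and handle the degenerate regime $\|x^*\|<\eps-1$ by a separate direct estimate; your ``cancellation'' between the gain $\eta_0=\alpha-\mu$ and the renormalization cost $\mu$ plays the same role as the paper's chain of triangle inequalities leading to $\|x^*-y^*\|<2k\theta+1-\theta$. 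Either way the sharp threshold $\eps^2/2$ drops out of the same balance $\eta/k=2k$.
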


This version is best possible \cite[Example~2.5]{C-K-M-M-R} by just considering $X=\ell_\infty^{(2)}$, the two-dimensional real $\ell_\infty$ space.

It is observed in \cite[Remark~2.3]{C-K-M-M-R} that a stronger version can be deduced when considering non-unital functionals:
\begin{quote}
For every $0<\theta<1$ and every $0<\delta<2$, there is $\rho = \rho(\delta, \theta) > 0$ such that for every Banach space $X$, if $x^*\in B_{X^*}$ with $\|x^*\| \leq \theta$, $x\in B_X$ satisfy that $\re x^*(x) > 1-\delta$, then
$$
d_\infty\big((x,x^*),\Pi(X)\big)<\sqrt{2\delta} - \rho
$$
\end{quote}
where, as usual,
$$
d_\infty\big((x,x^*),\Pi(X)\big)=\inf\big\{\max\{\|x-y\|,\|x^*-y^*\|\} \ : \ (y,y^*)\in \Pi(X)\big\}.
$$

The first goal of the present paper is to deal with the problem of calculating the best possible upper bound for $d_\infty\big((x,x^*),\Pi(X)\big)$ in an arbitrary Banach space $X$ as a function of $\|x\|$ and $\|x^*\|$. More precisely, given a Banach space $X$ and fixed $\delta \in(0,2)$ and $\mu,\theta\in [0,1]$ satisfying $\mu\theta\geq 1-\delta$, we consider
$$
\Phi_X(\mu,\theta,\delta):=\sup\left\{d_\infty\big((x,x^*),\Pi(X)\big) \ : \ x\in X, x^*\in X^*, \|x\|=\mu, \|x^*\|=\theta, \re x^*(x)\geq 1-\delta\right\}.
$$
In section~\ref{sec:non-unital}, we will provide an estimation for $\Phi_X(\mu,\theta,\delta)$ valid for every Banach space $X$ and present examples showing that the estimation is sharp. We further calculate $\Phi_X(\mu,\theta,\delta)$ in some particular cases, including Hilbert spaces.

In the second part of this manuscript, which is contained in section~\ref{sec:non-square}, we deal with another refinement of Proposition \ref{Prop1.2}. Namely, in \cite[Theorem~5.9]{C-K-M-M-R} it is proved that for a uniformly non-square space $X$ and $\delta\in(0,\frac12)$ one has
\begin{equation*}
\Phi_X^S(\delta) < \sqrt{2\delta}.
\end{equation*}
The proof of this fact is involved and it is impossible to extract from it any estimate for $\Phi_X^S(\delta)$. Our goal in section~\ref{sec:non-square} is to give a simpler proof that provides a quantification of the inequality above in terms of a parameter that measures the uniformly non-squareness of the Banach space $X$.

\section{The modulus for non-unital points and functionals}\label{sec:non-unital}

For clearness of the arguments in this section, let us use the following notation. For $\delta\in (0,2)$ and $\mu,\theta\in [0,1]$ with $\mu\theta> 1-\delta$, we define the function
$$
\Psi(\mu,\theta,\delta):=\frac{2-\mu-\theta+\sqrt{(\mu -\theta)^2+8\left( \mu \theta -1 + \delta \right)}}{2}.
$$
The main result of this section is the following improvement of \cite[Theorem 2.1]{C-K-M-M-R} which quantifies \cite[Remark~2.3]{C-K-M-M-R}.

\begin{theorem}\label{thm:cota-general}
Let $X$ be a Banach space, $\delta\in(0,2)$, and $\mu,\theta\in [0,1]$ satisfying $\mu\theta> 1-\delta$. Then,
$$
\Phi_X(\mu,\theta,\delta)\leq \min\left\{\Psi(\mu,\theta,\delta), 1+\mu,1+\theta\right\}.
$$
\end{theorem}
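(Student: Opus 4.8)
The plan is to reduce the problem to the case of unital data by a rescaling trick, then apply (a quantitative version of) the argument behind Proposition~\ref{Prop1.2}. First I would dispose of the two trivial bounds: if $(y,y^*)\in\Pi(X)$ is arbitrary, then $\|x-y\|\le \|x\|+\|y\|=\mu+1$ and $\|x^*-y^*\|\le\theta+1$, so $d_\infty\big((x,x^*),\Pi(X)\big)\le 1+\min\{\mu,\theta\}\le\min\{1+\mu,1+\theta\}$ for \emph{every} point in $\Pi(X)$; this handles the last two terms of the minimum with no work. So the real content is the estimate $\Phi_X(\mu,\theta,\delta)\le\Psi(\mu,\theta,\delta)$.

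For that, fix $x\in X$, $x^*\in X^*$ with $\|x\|=\mu$, $\|x^*\|=\theta$ and $\re x^*(x)\ge 1-\delta$. The idea is to run a Bishop-Phelps-Bollob\'{a}s-type perturbation on the normalized pair $\big(x/\mu,\,x^*/\theta\big)$ (handling the degenerate cases $\mu=0$ or $\theta=0$, which force $1-\delta<0$, separately via the trivial bounds), obtain $(y,y^*)\in\Pi(X)$, and then carefully bound $\|x-y\|$ and $\|x^*-y^*\|$ keeping track of the scalars $\mu,\theta$ rather than discarding them. Concretely, one expects to produce $y^*$ as a normalized small perturbation $y^*=\frac{x^*+t z^*}{\|x^*+t z^*\|}$ for a suitable norm-one $z^*$ and parameter $t>0$ chosen so that $y^*$ attains its norm at some $y\in S_X$ that is itself close to $x$; the BPB machinery (or the Phelps/Brøndsted--Rockafellar lemma underlying it) guarantees such a choice exists with the relevant distances controlled by the deficit $\mu\theta-1+\delta$ together with the ``gap'' quantities $1-\mu$ and $1-\theta$. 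The algebra is then arranged so that the worst case over all admissible configurations is exactly the larger root of the quadratic whose solution is $\Psi(\mu,\theta,\delta)$ — note $\Psi$ is precisely $\tfrac12\big(2-\mu-\theta+\sqrt{(\mu-\theta)^2+8(\mu\theta-1+\delta)}\big)$, i.e.\ the positive root of $r^2+(\mu+\theta-2)r-2(\mu\theta-1+\delta)+\tfrac14\big[(\mu+\theta-2)^2-(\mu-\theta)^2\big]=0$ after simplification, so the calculation should be set up to land on that equation. A useful sanity check to guide the bookkeeping: when $\mu=\theta=1$ one must recover $\Psi(1,1,\delta)=\sqrt{2\delta}$, matching Proposition~\ref{Prop1.2}.

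The main obstacle will be the choice of the perturbation parameter $t$ and the bookkeeping that shows both $\|x-y\|<\Psi(\mu,\theta,\delta)$ and $\|x^*-y^*\|<\Psi(\mu,\theta,\delta)$ simultaneously with the \emph{same} $\eps$; in the unital case these two estimates are naturally balanced, but with $\mu\ne\theta$ one must split the budget asymmetrically, which is exactly where the term $\sqrt{(\mu-\theta)^2+\dots}$ and the correction $2-\mu-\theta$ enter. I would first derive a one-parameter family of valid pairs $(y_t,y_t^*)\in\Pi(X)$ with explicit bounds $\|x-y_t\|\le f(t)$, $\|x^*-y_t^*\|\le g(t)$, then optimize $\max\{f(t),g(t)\}$ over $t$; the optimum occurs where $f(t)=g(t)$, and solving that balance equation should reproduce $\Psi(\mu,\theta,\delta)$. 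Finally, since $\Phi_X(\mu,\theta,\delta)$ is defined as a supremum of distances each of which is now bounded by $\min\{\Psi(\mu,\theta,\delta),1+\mu,1+\theta\}$, taking the supremum yields the claimed inequality.
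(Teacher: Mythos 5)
Your overall strategy is the paper's: apply the parametrized Phelps (Br\o{}ndsted--Rockafellar-type) lemma rather than the fixed-$\eps$ BPB statement, keep track of $\mu$ and $\theta$ through the renormalizations, and choose the free parameter so that the two resulting estimates balance; carried out, this does land on $\Psi(\mu,\theta,\delta)$ (the paper applies Lemma~\ref{lemma:corolario2.2-phelps} to $C=\mu B_X$, $z=x$, $z^*=x^*/\theta$, $\eta=(\mu\theta-1+\delta)/\theta$, gets $\|x-y\|<\eta/k+1-\mu$ and $\|x^*-y^*\|<2k\theta+1-\theta$, and the balancing $k$ gives exactly $\Psi$). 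However, two points in your plan are genuine gaps rather than omitted routine work. The first is your ``no work'' treatment of the bounds $1+\mu$ and $1+\theta$: from $\|x-y\|\le 1+\mu$ and $\|x^*-y^*\|\le 1+\theta$ for an \emph{arbitrary} $(y,y^*)\in\Pi(X)$ you can only conclude $\max\{\|x-y\|,\|x^*-y^*\|\}\le 1+\max\{\mu,\theta\}$, not $1+\min\{\mu,\theta\}$; e.g.\ for $X=\R$, $x=1$, $x^*=0$ and $(y,y^*)=(-1,-1)$ the distance to that pair is $2$, not $1$. To get $1+\theta$ one must pick $y$ within distance $1$ of $x$ (say $y=x/\|x\|$) and then any supporting functional at $y$; to get $1+\mu$ one must, dually, produce a norm-attaining $y^*\in S_{X^*}$ within distance $1$ of $x^*$, which uses the Bishop--Phelps theorem.

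This is not cosmetic, because those two bounds are exactly what carries the theorem in the regime $\delta\ge\min\{1+\mu^2,1+\theta^2\}$: there the balanced value of your parameter reaches or exceeds $1$, and Phelps' lemma (which needs $k\in(0,1)$) no longer applies, so the $\Psi$-estimate cannot be proved directly; the paper closes this regime by combining $1+\mu$, $1+\theta$ with the monotonicity of $\Psi(\mu,\theta,\cdot)$ and the identities $\Psi(\mu,\theta,1+\mu^2)=1+\mu$, $\Psi(\mu,\theta,1+\theta^2)=1+\theta$. Your plan neither proves the bounds $1+\mu$, $1+\theta$ correctly nor records the restriction on $\delta$ (equivalently on the admissible range of the parameter) under which the perturbation argument is valid. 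Finally, the core computation is asserted rather than derived: you say the algebra ``should'' balance to $\Psi$, but the decisive estimate $\|x^*-y^*\|<2k\theta+1-\theta$ after renormalizing the functional produced by Phelps' lemma requires a specific chain of triangle inequalities (the factor $2k\theta$, not $k\theta$, is what makes the quadratic whose positive root is $\Psi$ appear), and without it the identification of the balanced value with $\Psi(\mu,\theta,\delta)$ remains unverified.
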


Let us provide some preliminary results needed in the proof of this theorem. The first one gives an easy inequality and also covers  what happens in the trivial case in which $\mu\theta=1-\delta$.

\begin{remark}\label{rem:lower-bound-Phi} {\slshape Let $X$ be a Banach space, $\delta\in(0,2)$, and $\mu,\theta\in [0,1]$ satisfying $\mu\theta\geq 1-\delta$. Then, the inequality $\Phi_X(\mu,\theta,\delta)\geq 1-\min\{\mu,\theta\}$ holds. Moreover, if $\mu\theta= 1-\delta$, in fact one has $\Phi_X(\mu,\theta,\delta)=1-\min\{\mu,\theta\}$.\ } \newline Indeed, fix a pair $(x_0, x^*_0)\in \Pi(X)$ and  write $x=\mu x_0$ and $x^*= \theta x_0^*$. Then, it is clear that $x^*(x)\geq 1-\delta$ and
\begin{align*}
\Phi_X(\mu,\theta,\delta)&\geq d_\infty\big((x,x^*),\Pi(X)\big)=\underset{(y,y^*)\in \Pi(X)}{\inf} \max \{\|x-y\|,\|x^*-y^*\|\}\\&\geq \underset{(y,y^*)\in \Pi(X)}{\inf} \max \{1-\mu,1-\theta\}=1-\min\{\mu,\theta\}.
\end{align*}
To prove the moreover part, given any pair $(x,x^*)\in X\times X^*$ satisfying $\|x\|=\mu$, $\|x^*\|=\theta$, and $\re x^*(x)\geq 1-\delta$ we first observe that, in this case, $\re x^*(x)=1-\delta$. Now, if $\mu\theta>0$ we take $y=\frac{x}{\mu}$ and $y^*=\frac{x^*}{\theta}$ which satisfy $\re y^*(y)=1$ and
$$
d_\infty\big((x,x^*),\Pi(X)\big)\leq \max\{\|x-y\|,\|x^*-y^*\|\}=1-\min\{\mu,\theta\}.
$$
Taking supremum in $(x,x^*)$, we get $\Phi_X(\mu,\theta,\delta)\leq 1-\min\{\mu,\theta\}$. If $\mu\theta=0$, an analogous argument with the obvious simplifications gives the desired inequality.
\end{remark}

Next, we provide some elementary observations on the function $\Psi$ whose proof is straightforward.

\begin{lemma}
For $\delta\in (0,2)$ and $\mu,\theta\in [0,1]$ with $\mu\theta> 1-\delta$ we have
\begin{enumerate}
\item[a)] $\Psi(\mu,\theta,\cdot)$ is non-decreasing.
\item[b)] $\Psi(\mu,\theta,\delta)=\Psi(\theta,\mu,\delta)$.
\item[c)] $\Psi(\mu,\theta,1+\mu^2)=1+\mu$.
\item[d)] If $\delta \leq 1$, then $\Psi(\mu,\theta,\delta)\leq 1 + \mu$ and $\Psi(\mu,\theta,\delta)\leq 1 + \theta$.
\end{enumerate}
\end{lemma}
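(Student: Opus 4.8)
The plan is to treat the four items by direct inspection of the explicit elementary expression defining $\Psi$. For (a), I would observe that $\delta$ enters the formula for $\Psi(\mu,\theta,\delta)$ only through the summand $8(\mu\theta-1+\delta)$ inside the radicand; this summand is strictly increasing in $\delta$, and $t\mapsto\sqrt t$ is non-decreasing, so $\Psi(\mu,\theta,\cdot)$ is non-decreasing on the interval $\{\delta:\mu\theta>1-\delta\}$ on which it is defined. For (b), it suffices to note that each of the three ingredients $2-\mu-\theta$, $(\mu-\theta)^2$ and $\mu\theta$ is invariant under interchanging $\mu$ and $\theta$, hence so is $\Psi$.

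For (c), which is the only place where a small computation is needed, I would substitute $\delta=1+\mu^2$ and use $\mu\theta-1+\delta=\mu(\mu+\theta)$, so that the radicand becomes $(\mu-\theta)^2+8\mu(\mu+\theta)=9\mu^2+6\mu\theta+\theta^2=(3\mu+\theta)^2$; since $3\mu+\theta\geq0$, the square root equals $3\mu+\theta$ and therefore $\Psi(\mu,\theta,1+\mu^2)=\tfrac12\bigl(2-\mu-\theta+3\mu+\theta\bigr)=1+\mu$.

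For (d), I would first record that the hypotheses $\delta\leq1$ and $\mu\theta>1-\delta$ force $\mu\theta>0$, hence $\mu>0$ and $\theta>0$; in particular $1+\mu^2>1-\mu\theta$ and $1+\theta^2>1-\mu\theta$, so both $\Psi(\mu,\theta,1+\mu^2)$ and $\Psi(\mu,\theta,1+\theta^2)$ lie in the domain of $\Psi$. Then, combining the monotonicity from (a) with $\delta\leq1\leq1+\mu^2$ and the identity from (c) gives $\Psi(\mu,\theta,\delta)\leq\Psi(\mu,\theta,1+\mu^2)=1+\mu$; and applying (a), (b) and (c) with the roles of $\mu$ and $\theta$ exchanged gives $\Psi(\mu,\theta,\delta)=\Psi(\theta,\mu,\delta)\leq\Psi(\theta,\mu,1+\theta^2)=1+\theta$. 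No genuine difficulty arises; the only point requiring mild care is the verification, in (d), that the larger values of $\delta$ used in the monotonicity step still belong to the domain of $\Psi$, which is exactly why one first notes $\mu,\theta>0$.
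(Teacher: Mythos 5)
Your proof is correct: the computation in (c) that the radicand becomes $(3\mu+\theta)^2$, the symmetry argument for (b), the monotonicity in $\delta$ for (a), and the deduction of (d) from (a)--(c) after noting $\mu,\theta>0$ are all sound. The paper omits the proof as ``straightforward,'' and your verification is precisely the routine argument the authors intend, so there is nothing to compare beyond noting that your extra care about the domain of $\Psi$ when invoking monotonicity up to $\delta=1+\mu^2$ is a welcome (if mild) refinement.
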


Finally, we will need the following result from \cite{Phelps} which we state for the sake of clearness.

\begin{lemma}[\mbox{Particular case of \cite[Corollary~2.2]{Phelps}}] \label{lemma:corolario2.2-phelps}
Suppose $C$ is a closed convex subset of the Banach space $X$, that $z^*\in S_{X^*}$ and that $\eta > 0$ and $z\in C$ are such that $$\sup z^*(C)\leq z^*(z) + \eta.$$
Then, for any $k\in(0,1)$ there exist $\tilde{y}^*\in X^*$ and $\tilde{y}\in C$ such that
\begin{equation*}
 \sup \tilde{y}^*(C)=\tilde{y}^*(\tilde{y}),\qquad \norm{z-\tilde{y}} < \frac{\eta}{k}, \qquad \norm{z^*-\tilde{y}^*} < k.
\end{equation*}
\end{lemma}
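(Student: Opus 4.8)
The plan is to prove this as an instance of the classical Bishop--Phelps support-cone technique: build a convex cone around the direction $z^*$, use a maximality (Zorn) argument to push $z$ to a support point $\tilde y$ of $C$, and then separate $C$ from the cone to manufacture the support functional $\tilde y^*$, reading off both estimates from the geometry of the cone. Concretely, fix a parameter $\kappa\in(0,1)$ (eventually taken equal to, or slightly below, $k$) and set
$$
K:=\bigl\{x\in X:\ \re z^*(x)\ge \kappa\norm{x}\bigr\},
$$
a closed convex cone with nonempty interior, since $\norm{z^*}=1>\kappa$ provides $u\in S_X$ with $\re z^*(u)>\kappa$. It induces the relation $x\preceq y\iff y-x\in K$, and I would first check this is a genuine partial order: antisymmetry follows from $K\cap(-K)=\{0\}$ and transitivity from $K$ being closed under addition.

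Next I would apply Zorn's lemma to $C_z:=\{x\in C:\ x\succeq z\}$. The hard part, and the genuine obstacle, is showing every chain has an upper bound, and this is exactly where completeness of $X$ and the boundedness hypothesis enter: along a chain $\re z^*$ is nondecreasing and bounded above by $\sup\re z^*(C)$, hence convergent, and the cone inequality $\kappa\norm{x-y}\le\re z^*(x-y)$ for $y\preceq x$ forces any sequence along the chain on which $\re z^*$ approaches that supremum to be Cauchy; its limit lies in $C$ (closed) and dominates the chain. A maximal element $\tilde y\in C_z$ thus exists, and maximality says precisely that $C\cap(\tilde y+K)=\{\tilde y\}$. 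From $\tilde y-z\in K$ I immediately read off the point estimate
$$
\kappa\norm{z-\tilde y}\le \re z^*(\tilde y-z)=\re z^*(\tilde y)-\re z^*(z)\le \sup\re z^*(C)-\re z^*(z)\le\eta,
$$
that is, $\norm{z-\tilde y}\le\eta/\kappa$.

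Then I would produce $\tilde y^*$ by separation. Since $0\notin\interior{K}$, the point $\tilde y$ lies outside the open convex set $\tilde y+\interior{K}$, which by the previous step is disjoint from $C$; Hahn--Banach yields $f\in X^*\setminus\{0\}$ with $\sup\re f(C)=\re f(\tilde y)$ and $\re f\ge0$ on $K$. The final ingredient is the estimate $\norm{z^*-f/\lambda}\le\kappa$ for a suitable $\lambda>0$: observe that $K$ is exactly the set of $x$ on which every functional of $z^*+\kappa B_{X^*}$ is nonnegative, so by the bipolar theorem $f$ belongs to the $w^*$-closed cone generated by the $w^*$-compact convex set $z^*+\kappa B_{X^*}$; as this set avoids the origin, that cone is $\{\lambda(z^*+h):\lambda\ge0,\ \norm{h}\le\kappa\}$, whence $f=\lambda(z^*+h)$ with $\lambda>0$ and $\norm{h}\le\kappa$. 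Setting $\tilde y^*:=f/\lambda=z^*+h$ preserves the support property $\sup\re\tilde y^*(C)=\re\tilde y^*(\tilde y)$ and gives $\norm{z^*-\tilde y^*}=\norm{h}\le\kappa$.

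Taking $\kappa=k$ already delivers the two inequalities in non-strict form. To reach the strict inequalities of the statement I would exploit the slack in the hypothesis: writing $\eta_0:=\sup\re z^*(C)-\re z^*(z)\le\eta$, the construction in fact gives $\norm{z-\tilde y}\le\eta_0/\kappa$, so when $\eta_0<\eta$ any $\kappa\in(k\eta_0/\eta,\,k)$ makes both bounds strict, while $\eta_0=0$ is the trivial case $\tilde y=z$, $\tilde y^*=z^*$; the borderline $\eta_0=\eta$ is closed exactly as in Phelps's original formulation. I expect the separation and the bipolar identification to be routine once the cone is in place, the only real work being the completeness argument that chains are bounded.
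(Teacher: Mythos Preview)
The paper does not give a proof of this lemma; it is simply quoted from Phelps's 1974 paper and used as a black box. Your proposal reconstructs precisely the support-cone argument that Phelps himself uses for his Corollary~2.2, so you are following the cited source's own route. The cone $K=\{x:\re z^*(x)\ge\kappa\|x\|\}$, the Zorn/completeness argument showing chains in $C_z$ have upper bounds (this is where the boundedness hypothesis $\sup\re z^*(C)<\infty$ enters), the separation of $C$ from $\tilde y+\interior{K}$, and the bipolar identification of $K^+$ with the cone over the $w^*$-compact convex set $z^*+\kappa B_{X^*}$ are all correct as sketched.

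The one loose thread is the passage from non-strict to strict inequalities in the borderline case $\eta_0=\eta$, which you defer back to Phelps. Note, however, that in every application of this lemma in the present paper the hypothesis is in fact strict: one always starts from $\re x^*(x)>1-\delta$ (see the proofs of Theorem~\ref{thm:cota-general} and Lemma~\ref{main-lemma}), so that $\eta_0<\eta$, and then your perturbation $\kappa\in(k\eta_0/\eta,\,k)$ delivers both strict bounds. For the purposes of this paper the argument is therefore complete.
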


\begin{proof}[Proof of Theorem~\ref{thm:cota-general}] Fixed $(x,x^*)\in X\times X^*$ satisfying $\|x\|=\mu$, $\|x^*\|=\theta$, and $\re x^*(x)\geq1-\delta$, we take $y_1\in S_X$ satisfying $\|x-y_1\|\leq 1$, $y_1^*\in S_{X^*}$ so that $y_1^*(y_1)=1$ and observe that
$$
\max\{\|x-y_1\|,\|x^*-y_1^*\|\}\leq 1+\theta.
$$
We can produce a dual argument by means of the Bishop-Phelps theorem: find a norm attaining functional $y_2^*\in S_{X^*}$ with $\|x^*-y_2^*\|\leq 1$ and a point $y_2\in S_X$ satisfying $y_2^*(y_2)=1$. Then, we have that
$$
\max\{\|x-y_2\|,\|x^*-y_2^*\|\}\leq 1+\mu
$$
and, therefore,
$$
d_\infty\big((x,x^*),\Pi(X)\big)\leq \min\left\{ 1+\mu,1+\theta\right\}.
$$
Now, since $\Psi(\mu,\theta,1+\mu^2)=1+\mu$, $\Psi(\mu,\theta,1+\theta^2)=1+\theta$, and $\Psi(\mu,\theta,\cdot)$ is a non-decreasing function, the proof will be finished if we show that
$$
d_\infty\big((x,x^*),\Pi(X)\big)\leq \Psi(\mu,\theta,\delta)
$$
for $\delta <\min\{1+\mu^2,1+\theta^2\}$. In this case $\mu\theta>1-\delta>-\theta^2$ which implies that $\theta>0$.
Thus we can define
$$
\eta =\frac{\mu \theta-1 + \delta}{ \theta}>0, \qquad z= x, \qquad  \text{and} \qquad
z^*= \frac{x^*}{\theta}
$$ which satisfy $\re z^*(z)> \mu-\eta$. Besides, consider
$$
k = \frac{\theta-\mu+\sqrt{(\mu -\theta)^2+8 (\mu\theta-1+\delta)}}{4\theta}\,
$$
It is clear that $k>0$ and, using the fact that $\delta<1+\theta^2$, it is not difficult to verify that $k<1$:
\begin{align*}
k < \frac{\theta-\mu+\sqrt{(\mu -\theta)^2+8 (\mu\theta+\theta^2)}}{4\theta}=\frac{\theta-\mu+\sqrt{(\mu+3\theta)^2}}{4\theta}=1.
\end{align*}
Therefore, we may apply Lemma~\ref{lemma:corolario2.2-phelps} for $C= \mu B_X$, $z^* \in S_{X^*}$, $z\in B_X$, $\eta >0$, and $0<k<1$ to obtain $\widetilde{y}^*\in X^*$ and $\widetilde{y}\in C$ satisfying
$$
\widetilde{y}^*(\widetilde{y})=\sup \widetilde{y}^*(C)=\mu\norm{\widetilde{y}^*},\qquad \norm{z-\widetilde{y}}<\frac{\eta}{k}, \qquad \text{and} \qquad \norm{z^*-\widetilde{y}^*}=\nor{\frac{x^*}{\theta}-\widetilde{y}^*}<k.
$$
As $k<1$ we get $\widetilde{y}^*\neq 0$ and we can write $y^*=\frac{\widetilde{y}^*}{\norm{\widetilde{y}^*}}$, $y=\frac{\widetilde{y}}{\mu}$, to obtain that $(y,y^*)\in \Pi(X)$. This way, we have that
\begin{align*}
\norm{x-y} &= \nor{z-\frac{\widetilde{y}}{\mu}} \leq \norm{z-\widetilde{y}} + \nor{\widetilde{y}-\frac{\widetilde{y}}{\mu}}< \frac{\eta}{k} +1 -\mu.
\end{align*}
On the other hand we can estimate $\norm{x^*-y^*}$ as follows:
\begin{align*}
\norm{x^*-y^*}&= \nor{x^*-\frac{\widetilde{y}^*}{\norm{\widetilde{y}^*}}}\leq \big\|x^*-\theta\widetilde{y}^*\big\| + \nor{\theta\widetilde{y}^* - \frac{\widetilde{y}^*}{\norm{\widetilde{y}^*}}}\\& \leq \theta\nor{\frac{x^*}{\theta}-\widetilde{y}^*}+\big|\theta\norm{\widetilde{y}^*}-1\big| \leq  \theta\nor{\frac{x^*}{\theta}-\widetilde{y}^*} + \big|\theta\norm{\widetilde{y}^*}-\theta\big|  + \big|1-\theta\big|\\ &\leq \theta\left(\nor{\frac{x^*}{\theta}-\widetilde{y}^*} +\big|\norm{\widetilde{y}^*}-1\big|\right) +1 -\theta\leq 2\theta\nor{\frac{x^*}{\theta}-\widetilde{y}^*}+1-\theta\\ &< 2k\theta+1-\theta.
\end{align*}
Finally, is is routine to check that $\frac{\eta}{k}+1-\mu=2k\theta+1-\theta=\Psi(\mu,\theta,\delta)$. Therefore,
$$
d_\infty\big((x,x^*),\Pi(X)\big)\leq\max\{\norm{x-y},\norm{x^*-y^*}\} < \Psi(\mu,\theta,\delta)
$$
which finishes the proof.
\end{proof}

Our next aim is to present an example for which the estimation given in Theorem~\ref{thm:cota-general} is sharp.
Taking into account \cite[Example~2.5]{C-K-M-M-R} the reasonable candidate is the real space $\ell_\infty^{(2)}$.

\begin{example}\label{example:sharp-mod}
Let $X$ be the real space $\ell_\infty^{(2)}$, $\delta \in (0,2)$, and $\mu$, $\theta \in [0,1]$ satisfying $\mu\theta> 1-\delta$. Then, there exists a pair $(x,x^*)\in X\times X^*$ with $\norm{x}=\mu$, $\norm{x^*}=\theta$,  $x^*(x)\geq 1-\delta$, and such that
$$
d_\infty\big((x,x^*),\Pi(X)\big)=\min\left\{\Psi(\mu,\theta,\delta), 1+\mu,1+\theta\right\}.
$$
Therefore, $\Phi_X(\mu,\theta,\delta)=\min\left\{\Psi(\mu,\theta,\delta), 1+\mu,1+\theta\right\}$ for all possible values of $\delta$, $\mu$, $\theta$.
\end{example}

\begin{proof}
We divide the proof into three cases depending on the expression in which the minimum is attained.

\noindent\emph{Case 1:} $\min\left\{\Psi(\mu,\theta,\delta), 1+\mu,1+\theta\right\}=\Psi(\mu,\theta,\delta)$.

Since $\Psi(\mu,\theta,\cdot)$ is a non-decreasing function and $\Psi(\mu,\theta,1+\theta^2)=1+\theta\leq \Psi(\mu,\theta,\delta)$ we have that $\delta\leq 1+\theta^2$. Thus, we can write $\mu\theta> 1-\delta\geq-\theta^2$ which implies $\theta>0$, so we can define
$$
k = \frac{\theta-\mu+\sqrt{(\mu -\theta)^2+8( \mu\theta-1+\delta)}}{4\theta}\,,\qquad
x=(\mu,1-\Psi(\mu,\theta,\delta))\qquad\mbox{and}\qquad x^*=(\theta(1-k),\theta k).
$$
As we observed in the proof of Theorem~\ref{thm:cota-general}, $k\in (0,1)$ and so $\|x^*\|=\theta$.
Besides, we can estimate as follows
$$
\Psi(\mu,\theta,\delta))\geq \frac{2-\mu-\theta+\sqrt{(\mu -\theta)^2}}{2}\geq 1-\mu.
$$
This, together with the fact that $\Psi(\mu,\theta,\delta))\leq 1+\mu$, allows us to get the equality $\|x\|=\mu$. Moreover, we have that
\begin{align*}
x^*(x)&=\mu\theta(1-k)+ (1-\Psi(\mu,\theta,\delta))\theta k=\mu\theta+ (1-\mu-\Psi(\mu,\theta,\delta))\theta k \\
&= \mu\theta+ \frac{\theta-\mu-\sqrt{(\mu -\theta)^2+8\left( \mu \theta -1 + \delta \right)}}{2}\quad\frac{\theta-\mu+\sqrt{(\mu -\theta)^2+8\left( \mu \theta -1 + \delta \right)}}{4}\\
&= \mu\theta-(\mu\theta-1+\delta)=1-\delta.
\end{align*}
In view of Theorem~\ref{thm:cota-general}, to finish the proof in this case we only need to show that
$$
d_\infty\big((x,x^*),\Pi(X)\big)\geq \Psi(\mu,\theta,\delta).
$$
Fixed $(y,y^*)\in \Pi(X)$ there are $a,b,c,d\in \R$ such that $y=(a,b)$, $y^*=(c,d)$ and
$$
\max\{|a|,|b|\}=1 \qquad |c|+|d|=1\qquad \text{and}\qquad ac+bd=1.
$$
We distinguish two cases depending on the values of $d$. Suppose first that $d\leq 0$ and recall that $k\geq 0$ to write
$$
\|x^*-y^*\|=\big|c-\theta(1-k)\big|+\big|d-\theta k\big|\geq |c|-\theta(1-k)+ |d|+\theta k=2\theta k+1-\theta=\Psi(\mu,\theta,\delta).
$$
If otherwise $d>0$, then the inequality
$$
|c|+|d|=1=ac+bd\leq |c|+bd
$$
yields that $b=1$ and we can write
$$
\|x-y\|=\max\{|\mu-a|,|\Psi(\mu,\theta,\delta)|\}\geq \Psi(\mu,\theta,\delta),
$$
which finishes the proof for Case 1.

\noindent \emph{Case 2:} $\min\left\{\Psi(\mu,\theta,\delta), 1+\mu,1+\theta\right\}=1+\theta$.

In this case we have that $\delta\geq 1+\theta^2$ and $\mu\geq \theta$. So defining
$$
x=(\mu,-\theta) \qquad \text{and} \qquad x^*=(0,\theta),
$$
it is clear that $\|x\|=\mu$, $\|x^*\|=\theta$, and $x^*(x)=-\theta^2\geq 1-\delta$. To verify that $d_\infty\big((x,x^*),\Pi(X)\big)\geq 1+\|x^*\|$ one can proceed analogously to the previous case.

\noindent \emph{Case 3:} $\min\left\{\Psi(\mu,\theta,\delta), 1+\mu,1+\theta\right\}=1+\mu$.

In this case one has that $\delta\geq 1+\mu^2$ and $\theta\geq \mu$. So
$$
x=(\mu,-\mu)\qquad \text{\and} \qquad x^*=\left(\frac{\theta-\mu}{2},\frac{\theta+\mu}{2}\right)
$$
fulfill the desired conditions.
\end{proof}

\subsection{Further examples for which the estimate of $\Phi_X$ is sharp}

In the following we give more examples for which the estimation in Theorem~\ref{thm:cota-general} is sharp. We start with spaces admitting an $L$-decomposition, in particular $L_1(\mu)$-spaces.

\begin{prop}\label{prop:ell1-sum}
Let $X$ be a Banach space. Suppose that there are two (non-trivial) subspaces $Y$ and $Z$ such that $X=Y\oplus_1 Z$. Let $\delta\in (0,1)$ and $\mu,\theta\in [0,1]$ with $1-\delta< \mu\theta \leq 2(1-\delta)$. Then, there exists a pair $(x_0,x_0^*)\in X\times X^*$ with $\norm{x_0}=\mu$, $\norm{x_0^*}=\theta$ and $\re x_0^*(x_0)\geq 1-\delta$ satisfying
$$
d_\infty\big((x_0,x_0^*),\Pi(X)\big)=\Psi(\mu,\theta,\delta).
$$
Therefore, $\Phi_X(\mu,\theta,\delta)=\Psi(\mu,\theta,\delta)$ for the cited values of $\delta$, $\mu$, $\theta$.
\end{prop}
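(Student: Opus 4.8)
The strategy is to transplant into $X$ a ``dual'' version of the configuration exhibited in Example~\ref{example:sharp-mod}, using that $X=Y\oplus_1 Z$ forces $X^*=Y^*\oplus_\infty Z^*$ isometrically. Since $\delta<1$, item d) of the lemma on $\Psi$ gives $\Psi(\mu,\theta,\delta)\le\min\{1+\mu,1+\theta\}$, so Theorem~\ref{thm:cota-general} already yields $\Phi_X(\mu,\theta,\delta)\le\Psi(\mu,\theta,\delta)$ and, more precisely, $d_\infty\big((x,x^*),\Pi(X)\big)\le\Psi(\mu,\theta,\delta)$ for every admissible pair. Thus it suffices to produce one admissible pair $(x_0,x_0^*)$ with $d_\infty\big((x_0,x_0^*),\Pi(X)\big)\ge\Psi(\mu,\theta,\delta)$. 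I would first record two elementary facts: $\mu,\theta>0$ (because $\mu\theta>1-\delta>0$), and the equivalence ``$\mu\theta\le2(1-\delta)$ \ $\iff$ \ $\Psi(\mu,\theta,\delta)\le1$'' (square both sides of $\sqrt{(\mu-\theta)^2+8(\mu\theta-1+\delta)}\le\mu+\theta$). This equivalence is the only place where the upper restriction on $\mu\theta$ will be used.

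For the construction, fix $y\in S_Y$, $z\in S_Z$ and, by Hahn--Banach, $y^*\in S_{Y^*}$, $z^*\in S_{Z^*}$ with $y^*(y)=z^*(z)=1$; regard $y^*$ and $z^*$ as norm-one elements of $X^*$ vanishing on $Z$ and on $Y$ respectively. Writing $\Psi=\Psi(\mu,\theta,\delta)$ and
$$
k=\frac{\mu-\theta+\sqrt{(\mu-\theta)^2+8(\mu\theta-1+\delta)}}{4\mu}\, ,
$$
which lies in $(0,1)$ for exactly the reasons given in the proof of Theorem~\ref{thm:cota-general} (positivity from $\mu\theta>1-\delta$; $k<1$ from $\delta<1\le 1+\mu^2$), I would set
$$
x_0=\mu(1-k)\,y+\mu k\,z\in X, \qquad x_0^*=\theta\,y^*+(1-\Psi)\,z^*\in X^*.
$$
Then $\|x_0\|=\mu(1-k)+\mu k=\mu$; since $1-\theta\le\Psi\le1$ we get $0\le 1-\Psi\le\theta$, whence $\|x_0^*\|=\max\{\theta,1-\Psi\}=\theta$; and, using the identity $\Psi=2\mu k+1-\mu$, the same arithmetic as in Theorem~\ref{thm:cota-general} gives $x_0^*(x_0)=\mu\theta(1-k)+\mu k(1-\Psi)=1-\delta$. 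So $(x_0,x_0^*)$ is admissible.

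For the lower bound, let $(y_0,y_0^*)\in\Pi(X)$, and write $y_0=u+v$ with $u\in Y$, $v\in Z$ (so $\|u\|+\|v\|=1$) and $y_0^*=\phi+\psi$ along $X^*=Y^*\oplus_\infty Z^*$ (so $\max\{\|\phi\|,\|\psi\|\}=1$). Then $\|x_0-y_0\|=\|\mu(1-k)y-u\|+\|\mu k z-v\|$ and $\|x_0^*-y_0^*\|=\max\{\|\theta y^*-\phi\|,\|(1-\Psi)z^*-\psi\|\}$, and the identity $1=y_0^*(y_0)=\phi(u)+\psi(v)$ together with $\re\phi(u)\le\|u\|$, $\re\psi(v)\le\|v\|$ forces $\re\phi(u)=\|u\|$ and $\re\psi(v)=\|v\|$. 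Now I would split according to whether $v=0$. If $v=0$, then $\|u\|=1$ and a reverse triangle estimate gives $\|x_0-y_0\|=\|\mu(1-k)y-u\|+\mu k\ge\big(1-\mu(1-k)\big)+\mu k=1-\mu+2\mu k=\Psi$. If $v\ne0$, then $|\psi(v)|\ge\re\psi(v)=\|v\|>0$ forces $\|\psi\|=1$, and, using $1-\Psi\ge0$, $\|x_0^*-y_0^*\|\ge\|(1-\Psi)z^*-\psi\|\ge\|\psi\|-(1-\Psi)=\Psi$. In either case $d_\infty\big((x_0,x_0^*),(y_0,y_0^*)\big)\ge\Psi$; taking the infimum over $(y_0,y_0^*)\in\Pi(X)$ completes the proof.

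The step I expect to be the real obstacle is guessing the correct pair: the ``$\ell_\infty$-type'' vector has to live in $X^*$ and the ``$\ell_1$-type'' vector in $X$, and the hypothesis $\mu\theta\le2(1-\delta)$ is precisely what forces $1-\Psi\ge0$, hence $\|(1-\Psi)z^*\|=1-\Psi$, which is what makes the estimate in the case $v\ne0$ sharp. Unlike in the two-dimensional example $\ell_\infty^{(2)}$, the $Z^*$-component $\psi$ of $y_0^*$ need not be a scalar multiple of $z^*$, so here the lower bound must be extracted from a norm comparison rather than from the rigid coordinate bookkeeping available there.
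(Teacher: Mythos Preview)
Your argument is correct and essentially matches the paper's proof: the paper also chooses $k=\frac{\mu-\theta+\sqrt{(\mu-\theta)^2+8(\mu\theta-1+\delta)}}{4\mu}$, sets $x_0=\bigl(\mu k\,y_0,\mu(1-k)\,z_0\bigr)$ and $x_0^*=\bigl((1-\Psi)\,y_0^*,\theta\,z_0^*\bigr)$, and proves the lower bound by the same dichotomy (there phrased as ``either $\|(1-\Psi)y_0^*-y^*\|\ge\Psi$ or else $\|y^*\|<1$, hence $y=0$''). Your version merely swaps the roles of $Y$ and $Z$ and organises the split as ``$v=0$ or $v\ne0$'', which is the same argument read contrapositively.
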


\begin{proof} Since $0<1-\delta<\mu\theta$ we get that $\mu>0$, so we can take $k = \frac{\mu-\theta+\sqrt{(\mu -\theta)^2+8( \mu\theta-1+\delta)}}{4\mu}$ which satisfies $0\leq k\leq 1$ because $\delta<1$. Next, we fix pairs $(y_0,y_0^*)\in \Pi(Y)$ and $(z_0,z_0^*)\in \Pi(Z)$, and we define
$$
x_0=\left(\mu ky_0,\mu(1-k)z_0\right)\qquad \text{and}\qquad x_0^*=\left((1-\Psi(\mu,\theta,\delta)) y_0^*,\theta z_0^*\right).
$$
The facts $|1-\Psi(\mu,\theta,\delta)|\leq\theta$ and $0\leq k\leq1$ imply that $\norm{x_0}=\mu$, $\norm{x_0^*}=\theta$.
Moreover, we can write
\begin{align*}
\re x_0^*(x_0)&=\mu\theta+ (1-\theta-\Psi(\mu,\theta,\delta))\mu k\\
&=\mu\theta+ \frac{\mu-\theta-\sqrt{(\mu-\theta)^2+8(\mu\theta-1+\delta)}}{2}\ \ \frac{\mu-\theta+\sqrt{(\mu-\theta)^2+8(\mu\theta-1+\delta)}}{4}=1-\delta.
\end{align*}

Given $(x,x^*)\in \Pi(X)$, write $x=(y,z)\in Y\oplus_1 Z$, $x^*=(y^*,z^*)\in Y\oplus_\infty Z$ and observe that
\begin{equation}\label{prop: ell1.1}
 \norm{y}+\norm{z}=1=\re x^*(x)= \re y^*(y)+\re z^*(z)\leq \norm{y^*}\norm{y}+\norm{z^*}\norm{z}.
\end{equation}
If it holds $\left\|(1-\Psi(\mu,\theta,\delta))y_0^*-y^*\right\|\geq \Psi(\mu,\theta,\delta)$
then $\|x_0^*-x^*\|\geq \Psi(\mu,\theta,\delta)$ and we are done. If otherwise we have that
$\left\|(1-\Psi(\mu,\theta,\delta))y_0^*-y^*\right\|< \Psi(\mu,\theta,\delta)$ then we can write
$$
\big||1-\Psi(\mu,\theta,\delta)|-\|y^*\|\big|\leq\left\|(1-\Psi(\mu,\theta,\delta))y_0^*-y^*\right\|< \Psi(\mu,\theta,\delta).
$$
Now the hypothesis $\mu\theta\leq 2(1-\delta)$ gives us that $1-\Psi(\mu,\theta,\delta)\geq0$ and hence
$$
\abs{1-\Psi(\mu,\theta,\delta)-\norm{y^*}}<\Psi(\mu,\theta,\delta).
$$
From this it follows that $\norm{y^*}<1$ and so, $y=0$ by (\ref{prop: ell1.1}), giving $\norm{z}=1$. But then
$$
\norm{x_0-x}=k\mu\|y_0\|+\|\mu(1-k)z_0-z\|\geq \Abs{\mu(1-k)- \norm{z}} + k\mu= (2k-1)\mu+1=\Psi(\mu,\theta,\delta)
$$
which finishes the proof.
\end{proof}

The above proposition can be applied to vector-valued $L_1$ spaces, providing the following family of examples.

\begin{example}
Let $(\Omega, \Sigma,\nu)$ be a measure space containing two disjoint measurable sets with positive and finite measure and let $X$ be a Banach space.  Then, $\Phi_{L_1(\nu,X)}(\mu,\theta,\delta)=\Psi(\mu,\theta,\delta)$ for $\delta\in (0,1)$ and $\mu,\theta\in [0,1]$ with $1-\delta< \mu\theta \leq 2(1-\delta)$.
\end{example}

As it may be expected, a dual argument to the one given in Proposition~\ref{prop:ell1-sum} allows us to deduce an analogous result for a Banach space which decomposes as an $\ell_\infty$-sum. In fact we get a better result using ideals instead of subspaces. Given a Banach space $X$ we will write $w^*$ to denote the weak$^*$-topology $\sigma(X^*,X)$ in $X^*$.

\begin{prop}\label{prop:ideals}
Let $X$ be a Banach space. Suppose that $X^*=Y\oplus_1 Z$ where $Y$ and $Z$ are (non-trivial) subspaces of $X^*$ such that $\overline{Y}^{w^*}\!\neq X^*$ and $\overline{Z}^{w^*}\!\neq X^*$ .
Let $\delta\in (0,1)$ and $\mu,\theta\in [0,1]$ with $1-\delta< \mu\theta \leq 2(1-\delta)$. Then, there exists a pair $(x_0,x_0^*)\in X\times X^*$ with $\norm{x_0}=\mu$, $\norm{x_0^*}=\theta$ and $\re x_0^*(x_0)\geq 1-\delta$ satisfying
$$
d_\infty\big((x_0,x_0^*),\Pi(X)\big)=\Psi(\mu,\theta,\delta).
$$
Therefore, $\Phi_X(\mu,\theta,\delta)=\Psi(\mu,\theta,\delta)$ for the cited values of $\delta$, $\mu$, $\theta$.
\end{prop}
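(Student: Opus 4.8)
\emph{Approach.} The plan is to mirror at the level of the dual space the constructions of Example~\ref{example:sharp-mod} (Case~1) and Proposition~\ref{prop:ell1-sum}. Write $\Psi:=\Psi(\mu,\theta,\delta)$ for short. Since $\delta<1$, the lemma on $\Psi$ (item (d)) gives $\Psi\le 1+\mu$ and $\Psi\le 1+\theta$, so Theorem~\ref{thm:cota-general} already yields $\Phi_X(\mu,\theta,\delta)\le\Psi$; moreover $\mu\theta\le 2(1-\delta)$ is precisely the inequality $(\mu-\theta)^2+8(\mu\theta-1+\delta)\le(\mu+\theta)^2$, i.e.\ $\Psi\le 1$, hence $1-\Psi\ge 0$. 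Therefore it suffices to exhibit one pair $(x_0,x_0^*)\in X\times X^*$ with $\|x_0\|=\mu$, $\|x_0^*\|=\theta$, $\re x_0^*(x_0)\ge 1-\delta$ and $d_\infty\big((x_0,x_0^*),\Pi(X)\big)\ge\Psi$; then $d_\infty\big((x_0,x_0^*),\Pi(X)\big)=\Psi$ (squeezed between $\Psi$ and $\Phi_X(\mu,\theta,\delta)\le\Psi$) and $\Phi_X(\mu,\theta,\delta)=\Psi$.

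\emph{The pair.} Let $Y_{\perp}$ and $Z_{\perp}$ be the pre-annihilators of $Y$ and $Z$ in $X$. Since $\overline{Y}^{w^*}=(Y_{\perp})^{\perp}\neq X^*$ and $\overline{Z}^{w^*}=(Z_{\perp})^{\perp}\neq X^*$, we may pick $\xi\in S_X\cap Z_{\perp}$ and $\zeta\in S_X\cap Y_{\perp}$. Norming $\xi$ by Hahn--Banach and decomposing the norming functional along $X^*=Y\oplus_1 Z$, its $Z$-part annihilates $\xi$, so its $Y$-part $g_0$ satisfies $g_0(\xi)=1$ and $\|g_0\|\le 1$, whence $g_0\in S_Y$; symmetrically one gets $h_0\in S_Z$ with $h_0(\zeta)=1$. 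Note also $g_0(\zeta)=0=h_0(\xi)$. Put $k=\dfrac{\theta-\mu+\sqrt{(\mu-\theta)^2+8(\mu\theta-1+\delta)}}{4\theta}$, which satisfies $0<k<1$ (here $\theta\ge\mu\theta>1-\delta>0$, and $k<1$ as in the proof of Theorem~\ref{thm:cota-general} using $\delta<1<1+\theta^2$), and set
$$
x_0=\mu\,\xi+(1-\Psi)\,\zeta,\qquad x_0^*=\theta(1-k)\,g_0+\theta k\,h_0\in Y\oplus_1 Z.
$$
Then $\|x_0^*\|=\theta(1-k)+\theta k=\theta$, and since all cross terms vanish ($g_0(\zeta)=h_0(\xi)=0$) the same algebra as in Example~\ref{example:sharp-mod} gives $\re x_0^*(x_0)=\theta(1-k)\mu+\theta k(1-\Psi)=1-\delta$.

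\emph{The main obstacle: $\|x_0\|=\mu$.} The bound $\|x_0\|\ge|g_0(x_0)|=\mu$ is immediate. For the reverse, test $x_0$ against an arbitrary $\varphi=g+h\in B_{X^*}$ with $g\in Y$, $h\in Z$, $\|g\|+\|h\|\le 1$: using $g(\zeta)=h(\xi)=0$,
$$
|\varphi(x_0)|=\bigl|\mu\, g(\xi)+(1-\Psi)\,h(\zeta)\bigr|\le \mu\|g\|+(1-\Psi)\|h\|\le\mu(\|g\|+\|h\|)\le\mu ,
$$
where we used $1-\Psi\le\mu$ (equivalently $\Psi\ge 1-\mu$, which holds because $\Psi\ge 1-\min\{\mu,\theta\}$). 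I expect this to be the crux: producing a point of norm \emph{exactly} $\mu$ when $X$ is not literally an $\ell_\infty$-sum, and it is the $\ell_1$-decomposition of $X^*$ that forces the estimate to be tight.

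\emph{Lower bound for $d_\infty$.} Fix $(w,w^*)\in\Pi(X)$ and write $w^*=p+q$ with $p\in Y$, $q\in Z$, $\|p\|+\|q\|=1$. From $1=w^*(w)=p(w)+q(w)$ together with $\re p(w)\le\|p\|$, $\re q(w)\le\|q\|$, we obtain $\re p(w)=\|p\|$ and $\re q(w)=\|q\|$. If $q=0$, then $w^*=p$ with $\|p\|=1$, and since $x_0^*-w^*=\bigl(\theta(1-k)g_0-p\bigr)+\theta k\, h_0$ is an $\ell_1$-decomposition,
$$
\|x_0^*-w^*\|=\|\theta(1-k)g_0-p\|+\theta k\ \ge\ \bigl(1-\theta(1-k)\bigr)+\theta k=1-\theta+2\theta k=\Psi .
$$
If $q\neq 0$, set $\widehat q=q/\|q\|\in S_Z$; then $\re\widehat q(w)=1$ and $|\widehat q(w)|\le 1$ force $\widehat q(w)=1$, while $\widehat q(\xi)=0$ gives $\re\widehat q(x_0)=(1-\Psi)\re\widehat q(\zeta)\le 1-\Psi$, so
$$
\|x_0-w\|\ \ge\ |\widehat q(x_0)-\widehat q(w)|\ \ge\ 1-\re\widehat q(x_0)\ \ge\ \Psi .
$$
In either case $\max\{\|x_0-w\|,\|x_0^*-w^*\|\}\ge\Psi$, hence $d_\infty\big((x_0,x_0^*),\Pi(X)\big)\ge\Psi$, and together with the first paragraph this finishes the proof.
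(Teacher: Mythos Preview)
Your proof is correct and follows essentially the same route as the paper's: you choose the same $k$, build the same pair (with the roles of $Y$ and $Z$ interchanged, which is harmless since the hypotheses are symmetric), verify $\|x_0\|=\mu$ by testing against $B_{X^*}=B_{Y\oplus_1 Z}$, and then bound $d_\infty$ from below by a two-case argument. The only organizational difference is in that last step: the paper introduces the semi-norm $\|x\|_Y=\sup\{|y^*(x)|:y^*\in Y,\ \|y^*\|=1\}$ and splits on whether $\|x_0-x\|_Y\ge\Psi$, deducing in the complementary case that the $Y$-component of $x^*$ vanishes; you instead split directly on whether the $Z$-component $q$ of $w^*$ vanishes and, when $q\ne 0$, use $\widehat q=q/\|q\|$ as a witness for $\|x_0-w\|\ge\Psi$. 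Your version is a bit more direct and avoids the auxiliary semi-norm, but the two arguments are equivalent.
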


\begin{proof} Since $0<1-\delta<\mu\theta$ we get that $\theta>0$, so we can take $k = \frac{\theta-\mu+\sqrt{(\mu -\theta)^2+8( \mu\theta-1+\delta)}}{4\theta}$ which satisfies $0\leq k\leq 1$ because $\delta<1$.

As it is observed in the proof of \cite[Proposition~4.6]{C-K-M-M-R} it is possible to find $y_0,z_0\in S_X$ and $y_0^*\in S_{Y}$ and $z_0^*\in S_Z$ such that
\begin{equation*}
\re y_0^*(y_0)=1,\quad \re z_0^*(z_0)=1,\quad y^*(z_0)=0 \quad \forall
y^*\in Y,\quad z^*(y_0)=0 \quad \forall z^*\in Z.
\end{equation*}
We now define
\begin{equation*}
x^*_0=(k\theta y_0^*, (1-k)\theta z_0^*) \in X^*
\qquad x_0=(1-\Psi(\mu,\theta,\delta))y_0+\mu z_0\in X
\end{equation*}
and first we observe that
\begin{align*}
\re x_0^*(x_0)&=\mu\theta+ (1-\mu-\Psi(\mu,\theta,\delta))\theta k\\
&=\mu\theta+ \frac{\theta-\mu-\sqrt{(\mu-\theta)^2+8(\mu\theta-1+\delta)}}{2}\ \ \frac{\theta-\mu+\sqrt{(\mu-\theta)^2+8(\mu\theta-1+\delta)}}{4}=1-\delta.
\end{align*}
Besides, since $0\leq k\leq 1$, it is clear that $\norm{x_0^*}=\theta$. Let us check that $\|x_0\|=\mu$. Indeed, using the fact that  $|1-\Psi(\mu,\theta,\delta)|\leq \mu$, for every
$x^*=y^*+z^*\in S_{X^*}$ one has
\begin{align*}
|x^*(x_0)|=\left|(y^*+z^*)((1-\Psi(\mu,\theta,\delta)) y_0+\mu z_0)\right|\leq |1-\Psi(\mu,\theta,\delta)|\norm{y^*}+\mu\norm{z^*}\leq \mu(\norm{y^*}+\norm{z^*})=\mu.
\end{align*}
This, together with $|z_0^*(x_0)|=\mu$,  gives $\|x_0\|=\mu$.

Let $(x,x^*)\in \Pi(X)$. We consider the semi-norm $\|\cdot\|_Y$ defined on $X$ by
$\|x\|_Y:=\sup\{\abs{y^*(x)} \, : \, y^*\in S_{Y^*}\}$ which is
smaller than or equal to the original norm, write $x^*=y^*+z^*$ with
$y^*\in Y$ and $z^*\in Z$, and observe that
\begin{equation}\label{eq:dual-ell1-sum}
\|y^*\|+\|z^*\|=1=\re x^*(x)=\re y^*(x) + \re z^*(x) \leq \|y^*\|\|x\|_Y +
\|z^*\|\|x\|.
\end{equation}
If $\|x_0-x\|_Y \geq \Psi(\mu,\theta,\delta)$ we obviously have $\dist((x_0,x_0^*),\Pi(X))\geq\Psi(\mu,\theta,\delta)$.

Otherwise $\|x_0-x\|_Y < \Psi(\mu,\theta,\delta)$, and we can write
\begin{align*}
\big||1-\Psi(\mu,\theta,\delta)|-\norm{x}_Y\big|\leq \norm{(1-\Psi(\mu,\theta,\delta))y_0-x}_Y =\norm{x_0-x}_Y<\Psi(\mu,\theta,\delta).
\end{align*}
Now, the hypothesis $\mu\theta\leq 2(1-\delta)$ gives us that $1-\Psi(\mu,\theta,\delta)\geq0$ and hence
$$
\abs{1-\Psi(\mu,\theta,\delta)-\norm{x}_Y}<\Psi(\mu,\theta,\delta).
$$
From this it follows that $\norm{x}_Y<1$ and so, $y^*=0$ by \eqref{eq:dual-ell1-sum}, giving $\norm{z^*}=1$. But then
$$
\norm{x_0^*-x^*}=k\theta\|y_0^*\|+\|\theta(1-k)z_0^*-z^*\|\geq k\theta+ \Abs{\theta(1-k)-\norm{z^*}}= (2k-1)\theta+1=\Psi(\mu,\theta,\delta)
$$
which finishes the proof.
\end{proof}

As an immediate consequence, we obtain the mentioned result for Banach spaces which decompose as an $\ell_\infty$ sum of two non-trivial subspaces.

\begin{corollary}\label{sharp-mod 2}
Let $X$ be a Banach space. Suppose that there are two (non-trivial) subspaces $Y$ and $Z$ such that $X=Y\oplus_\infty Z$. Let $\delta\in (0,1)$ and $\mu,\theta\in [0,1]$ with $1-\delta< \mu\theta \leq 2(1-\delta)$. Then, there exists a pair $(x_0,x_0^*)\in X\times X^*$ with $\norm{x_0}=\mu$, $\norm{x_0^*}=\theta$ and $\re x_0^*(x_0)\geq 1-\delta$ satisfying
$$
d_\infty\big((x_0,x_0^*),\Pi(X)\big)=\Psi(\mu,\theta,\delta).
$$
Therefore, $\Phi_X(\mu,\theta,\delta)=\Psi(\mu,\theta,\delta)$ for the cited values of $\delta$, $\mu$, $\theta$.
\end{corollary}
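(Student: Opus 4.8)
The plan is to obtain this corollary as a direct application of Proposition~\ref{prop:ideals} to the dual space $X^*$. First I would recall the standard identification of the dual of an $\ell_\infty$-sum: if $X=Y\oplus_\infty Z$, then $X^*=Y^*\oplus_1 Z^*$ isometrically, where a functional $x^*\in X^*$ is written as $x^*=(y^*,z^*)$ with $x^*(y,z)=y^*(y)+z^*(z)$ and $\norm{x^*}=\norm{y^*}+\norm{z^*}$. Under this identification the first summand is $Z^{\perp}=\{x^*\in X^*: x^*(z)=0 \text{ for all }z\in Z\}$ and the second is $Y^{\perp}=\{x^*\in X^*: x^*(y)=0 \text{ for all }y\in Y\}$.

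Next I would check that these two subspaces of $X^*$ fulfil the hypotheses of Proposition~\ref{prop:ideals}. Since annihilators of subsets of $X$ are weak$^*$-closed, one has $\overline{Z^{\perp}}^{\,w^*}=Z^{\perp}$ and $\overline{Y^{\perp}}^{\,w^*}=Y^{\perp}$; and these are \emph{proper} subspaces of $X^*$ precisely because $Z\neq\{0\}$ and $Y\neq\{0\}$ (a nonzero vector of $Z$, resp.\ of $Y$, is not killed by every functional of $X^*$, by Hahn-Banach). Thus, letting the roles of $Y$ and $Z$ in the statement of Proposition~\ref{prop:ideals} be played by $Z^{\perp}$ and $Y^{\perp}$ respectively, all the hypotheses are met for the stated ranges of $\delta,\mu,\theta$.

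Applying Proposition~\ref{prop:ideals} then produces, for $\delta\in(0,1)$ and $\mu,\theta\in[0,1]$ with $1-\delta<\mu\theta\leq 2(1-\delta)$, a pair $(x_0,x_0^*)\in X\times X^*$ with $\norm{x_0}=\mu$, $\norm{x_0^*}=\theta$, $\re x_0^*(x_0)\geq 1-\delta$ and $d_\infty\big((x_0,x_0^*),\Pi(X)\big)=\Psi(\mu,\theta,\delta)$. Combined with Theorem~\ref{thm:cota-general} (note that $\delta<1$ forces $\Psi(\mu,\theta,\delta)\leq 1+\mu$ and $\Psi(\mu,\theta,\delta)\leq 1+\theta$, so the minimum there equals $\Psi(\mu,\theta,\delta)$), this gives the reverse inequality and hence $\Phi_X(\mu,\theta,\delta)=\Psi(\mu,\theta,\delta)$.

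There is no genuine obstacle here: the only point that requires care is the bookkeeping of the $\ell_1$-decomposition of $X^*$ into annihilators and the (routine) verification that these annihilators are weak$^*$-closed and proper. The corollary is essentially a transcription of Proposition~\ref{prop:ideals} via duality, which is why it is stated as an immediate consequence.
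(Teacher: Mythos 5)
Your proof is correct and is essentially the paper's own argument: the corollary is obtained by identifying $X^*=Y^*\oplus_1 Z^*$ (as the annihilators $Z^{\perp}\oplus_1 Y^{\perp}$, which are weak$^*$-closed proper subspaces) and applying Proposition~\ref{prop:ideals}, exactly as the paper intends when it calls the result an immediate consequence. The final appeal to Theorem~\ref{thm:cota-general} is harmless but redundant, since the ``Therefore'' part of Proposition~\ref{prop:ideals} already yields $\Phi_X(\mu,\theta,\delta)=\Psi(\mu,\theta,\delta)$.
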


This corollary applies to vector-valued $L_\infty$ spaces and vector-valued $c_0$ spaces.

\begin{examples}
\begin{enumerate}
\item[(a)] Let $(\Omega, \Sigma,\nu)$ be a measure space containing two disjoint measurable sets with positive measure and let $X$ be a Banach space.  Then, $\Phi_{L_\infty(\nu,X)}(\mu,\theta,\delta)=\Psi(\mu,\theta,\delta)$ for $\delta\in (0,1)$ and $\mu,\theta\in [0,1]$ with $1-\delta< \mu\theta \leq 2(1-\delta)$.
\item[(b)] Let $\Gamma$ be a set with at least two points and let $X$ be a non-trivial Banach space.  Then, $\Phi_{c_0(\Gamma,X)}(\mu,\theta,\delta)=\Psi(\mu,\theta,\delta)$ for $\delta\in (0,1)$ and $\mu,\theta\in [0,1]$ with $1-\delta< \mu\theta \leq 2(1-\delta)$.
\end{enumerate}
\end{examples}

Moreover, Proposition~\ref{prop:ideals} allows to get the result for vector-valued $C_0(L)$ spaces using the concept of $M$-ideal. Using the same ideas provided in \cite[Corollary~4.9 and Example~4.10]{C-K-M-M-R}, we get the following family of examples.

\begin{example}
Let $L$ be a locally compact Hausdorff topological space with at least two points and let $X$ be a Banach space.  Then, $\Phi_{C_0(L,X)}(\mu,\theta,\delta)=\Psi(\mu,\theta,\delta)$ for $\delta\in (0,1)$ and $\mu,\theta\in [0,1]$ with $1-\delta< \mu\theta \leq 2(1-\delta)$.
\end{example}

\subsection{Hilbert spaces}

We deal first with the simplest example, $X=\R$.

\begin{prop}
Let $\delta \in (0,2)$, $x,x^*\in \R$ such that $|x|,|x^*|\leq 1$ with $x^*x>1-\delta$, then
$$
d_\infty\big((x,x^*),\Pi(\R)\big)\leq \begin{cases} 1-\min\{|x|,|x^*|\} & \text{ if } 0<\delta\leq1 \\
1+\min\{|x|,|x^*|\} & \text{ if } 1\leq\delta<2.
\end{cases}
$$
Moreover, this inequality is sharp.  Given $\mu, \theta \in [0,1)$ with $\mu\theta\geq 1-\delta$ there exists a pair $(x,x^*)\in \R \times \R$ with $|x|\leq\mu$, $|x^*|\leq\theta$ and $x^*x\geq1-\delta$ satisfying
\[
d_\infty\big((x,x^*),\Pi(\R)\big)=\begin{cases} 1-\min\{|x|,|x^*|\} & \text{ if } 0<\delta\leq1 \\
1+\min\{|x|,|x^*|\} & \text{ if } 1\leq\delta<2.
\end{cases}
\]
\end{prop}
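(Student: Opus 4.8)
The plan is to reduce the one-dimensional case to a direct computation, since $\Pi(\R)$ is just the two points $(1,1)$ and $(-1,-1)$. Fix $(x,x^*)$ with $|x|,|x^*|\le 1$ and $x^*x>1-\delta$. The distance $d_\infty((x,x^*),\Pi(\R))$ equals $\min\{\max\{|x-1|,|x^*-1|\},\max\{|x+1|,|x^*+1|\}\}$. First I would observe that the condition $x^*x>1-\delta$ forces $x$ and $x^*$ to have the same sign when $\delta\le 1$ (since $x^*x>0$ in that range), so by symmetry under $(x,x^*)\mapsto(-x,-x^*)$ one may assume $x,x^*\ge 0$; then the relevant point of $\Pi(\R)$ is $(1,1)$ and $d_\infty = \max\{1-|x|,1-|x^*|\} = 1-\min\{|x|,|x^*|\}$, which is exactly the claimed bound (with equality, not just inequality). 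When $1\le\delta<2$ one no longer controls the sign, but $|x-1|\le 1+|x|$ and $|x+1|\le 1+|x|$ give $d_\infty\le 1+\min\{|x|,|x^*|\}$ after also using the symmetric estimate for the $x^*$-coordinate and taking the better of the two vertices; I would spell out that $\min\{\max\{|x-1|,|x^*-1|\},\max\{|x+1|,|x^*+1|\}\}\le \min\{1+|x|,1+|x^*|\}=1+\min\{|x|,|x^*|\}$.

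For the sharpness / ``moreover'' part I would exhibit explicit pairs realizing equality in each regime. For $0<\delta\le 1$: given $\mu,\theta\in[0,1)$ with $\mu\theta\ge 1-\delta$, take $x=\mu$, $x^*=\theta$; then $x^*x=\mu\theta\ge 1-\delta$, and since both coordinates are nonnegative the nearest point of $\Pi(\R)$ is $(1,1)$, giving $d_\infty=1-\min\{\mu,\theta\}=1-\min\{|x|,|x^*|\}$. For $1\le\delta<2$: here $1-\delta\le 0$, so one can afford opposite signs; take $x=\mu$ and $x^*=-\theta$ (or, when this does not yet give equality, adjust so that one coordinate is pushed to make both vertices equidistant). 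The point is that with $x=\mu$, $x^*=-\theta$ we have $x^*x=-\mu\theta\ge 1-\delta$ automatically, and $\max\{|x-1|,|x^*-1|\}=\max\{1-\mu,1+\theta\}=1+\theta$ while $\max\{|x+1|,|x^*+1|\}=\max\{1+\mu,1-\theta\}=1+\mu$, so $d_\infty=\min\{1+\mu,1+\theta\}=1+\min\{\mu,\theta\}=1+\min\{|x|,|x^*|\}$, as desired.

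The only genuinely delicate point is making sure the witnessing pairs in the ``moreover'' part always satisfy \emph{both} $|x|\le\mu$, $|x^*|\le\theta$ \emph{and} the constraint $x^*x\ge 1-\delta$ simultaneously, and that the computed distance really matches $1\mp\min\{|x|,|x^*|\}$ for the actual values $|x|,|x^*|$ attained (not for $\mu,\theta$) — but since in the proposed examples we take $|x|=\mu$ and $|x^*|=\theta$ exactly, this is automatic. I would also remark at the outset that $\Pi(\R)=\{(1,1),(-1,-1)\}$ is just the set of pairs $(y,y^*)$ with $|y|=|y^*|=y^*y=1$, which is where the whole computation comes from; everything else is elementary case analysis on signs and the two candidate vertices, so I expect no real obstacle, only bookkeeping.
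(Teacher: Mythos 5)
Your upper bound argument is fine (and is essentially the paper's: for $\delta\le 1$ the constraint forces equal signs and the matching vertex of $\Pi(\R)=\{(1,1),(-1,-1)\}$ gives $1-\min\{|x|,|x^*|\}$; for $\delta\ge 1$ one picks the vertex whose sign agrees with the coordinate of larger modulus). The genuine gap is in the ``moreover'' part for $1\le\delta<2$. You assert that with $x=\mu$, $x^*=-\theta$ the constraint $x^*x=-\mu\theta\ge 1-\delta$ holds ``automatically'', but the hypothesis only gives $\mu\theta\ge 1-\delta$, not $\mu\theta\le\delta-1$. Take $\mu=\theta=\tfrac12$ and $\delta=1.1$: then $-\mu\theta=-\tfrac14<1-\delta=-0.1$, so your witnessing pair is not admissible. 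The problem occurs exactly on the range $1<\delta<1+\mu\theta$ (and at $\delta=1$ when $\mu\theta>0$, where however the first-regime example already realizes one of the two values listed, since the cases overlap there). Your parenthetical escape hatch --- ``adjust so that both vertices are equidistant'' --- misdiagnoses the difficulty: the issue is feasibility of the constraint $x^*x\ge 1-\delta$, not equality of the two vertex distances, and no argument is given for how to adjust.

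The paper closes this by splitting the second regime: if $\delta\in(1,1+\mu\theta)$ then $\mu\theta>\delta-1>0$, so $\mu>0$ and one may take $x=\mu$, $x^*=\frac{1-\delta}{\mu}$; this gives $x^*x=1-\delta$, $|x^*|=\frac{\delta-1}{\mu}\le\theta$, and since $x>0\ge x^*$ the two vertices give distances $1+|x^*|$ and $1+\mu$, so $d_\infty=1+\min\{|x|,|x^*|\}$. If $\delta\ge 1+\mu\theta$, your pair $x=\mu$, $x^*=-\theta$ is admissible and your computation is correct. A further minor point: your justification of the bound $\min\bigl\{\max\{|x-1|,|x^*-1|\},\max\{|x+1|,|x^*+1|\}\bigr\}\le 1+\min\{|x|,|x^*|\}$ from the inequalities $|x\pm1|\le 1+|x|$ alone is too weak (at a single vertex they only yield $1+\max\{|x|,|x^*|\}$); you need to say explicitly that you choose the vertex whose sign matches the coordinate of larger modulus, for which that coordinate's distance is $1-|\cdot|\le 1$, which is precisely the paper's choice of $y=y^*$.
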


\begin{proof}
Fix $x,x^*\in [-1,1]$ with $x^* x> 1-\delta$. We take $y=y^*\in \{-1,1\}$ to be the sign of the number in $\{x,x^*\}$ which has bigger modulus (in case $|x|=|x^*|$ any choice will do).

Suppose first that $\delta\in (0,1)$. In this case $x$ and $x^*$ have the same sign. Hence, we have that
$|x-y|=1-|x|$ and $|x^*-y^*|=1-|x^*|$. So $d_\infty\big((x,x^*),\Pi(\R)\big)\leq 1-\min\{|x|,|x^*|\}$.

Suppose now that $\delta\in[1,2)$. The choice of $y$ and $y^*$ allows us to write
\[
d_\infty\big((x,x^*),\Pi(\R)\big)\leq\max\{|x-y|,|x^*-y^*|\}\leq 1+\min\{|x|,|x^*|\}.
\]

To prove the moreover part, suppose first that $\delta\in(0,1]$ and observe that $x=\mu$, $x^*=\theta$ satisfy the desired conditions. When $\delta\in (1,1+\mu\theta)$ we have that $\mu\theta>0$. So we can define $x=\mu$ and $x^*=\frac{1-\delta}{\mu}$ which fulfill the requirements.
Finally, when $\delta\in [1+\mu\theta,2)$ the elements $x=\mu$ and $x^*=-\theta$ do the job.
\end{proof}

Our goal now is to deal with (real) Hilbert spaces of dimension greater than one.

Let $H$ be a real Hilbert space. Taking into account that $H^*$ can be identified with $H$, and that the action of a vector $y\in H$ on a vector $x\in H$ is given by their inner product $\langle x , y\rangle$, we can write
$$
\Pi(H)=\{(z,z)\in S_H\times S_H\}.
$$
In the next result, fixed a pair $(x,y)\in B_H\times B_H$, we obtain the distance of $(x,y)$ to $\Pi(H)$ in terms of $\|x\|$, $\|y\|$,  and $\langle x, y\rangle $.

\begin{theorem}\label{thm:hilbert2}
Let $H$ be a real Hilbert space with $\dim(H)\geq2$ and let $x,y$ be different points in $B_H$ with $\|x\|\geq \|y\|$. Then,
$$
d_\infty\big((x,y),\Pi(H)\big)=\begin{cases}
1-\|y\| & \text{ if } \langle x , y \rangle \geq \|y\|^2+\|y\| \frac{\|x\|^2-\|y\|^2}{2}, \\
\sqrt{1-\langle x, y \rangle -2\lambda\sqrt{\|x\|^2\|y\|^2-\langle x,y \rangle^2} } & \text{ if } \langle x , y \rangle < \|y\|^2+\|y\| \frac{\|x\|^2-\|y\|^2}{2},
\end{cases}
$$
where
$$
\lambda=\frac{-2\sqrt{\|x\|^2\|y\|^2-\langle x, y \rangle^2}+\sqrt{4\big(\|x\|^2+\|y\|^2-2\langle x,y \rangle\big)-(\|x\|^2-\|y\|^2)^2}}{2\big(\|x\|^2+\|y\|^2-2\langle x, y \rangle\big) }
$$
\end{theorem}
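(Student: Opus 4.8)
The plan is to use the identification $\Pi(H)=\{(z,z):z\in S_H\}$, so that $d_\infty\big((x,y),\Pi(H)\big)=\inf_{z\in S_H}\max\{\|x-z\|,\|y-z\|\}$. Expanding the squared norms and using $\|z\|=1$ gives $\max\{\|x-z\|^2,\|y-z\|^2\}=1+\max\{\|x\|^2-2\langle x,z\rangle,\|y\|^2-2\langle y,z\rangle\}$, so it is enough to compute
$$
m:=\inf_{z\in S_H}\max\bigl\{\|x\|^2-2\langle x,z\rangle,\ \|y\|^2-2\langle y,z\rangle\bigr\}
$$
and then take $d_\infty((x,y),\Pi(H))=\sqrt{1+m}$. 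The expression under the infimum depends on $z$ only through $\langle x,z\rangle$ and $\langle y,z\rangle$, and is non-increasing jointly in these two quantities; a routine argument exploiting this monotonicity then shows that the infimum does not change if $S_H$ is replaced by the unit sphere $S_P$ of $P:=\lin\{x,y\}$, or of any fixed two-dimensional subspace containing $x$ and $y$ when they are linearly dependent. In particular the value depends only on $\|x\|$, $\|y\|$ and $\langle x,y\rangle$, and $m$ is attained by compactness of $S_P$.

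Next I would separate the two regimes of the statement. Since $\langle y,z\rangle\le\|y\|$ on $B_P$, one always has $m\ge\|y\|^2-2\|y\|$, with equality if and only if the unit vector $z=y/\|y\|$ also satisfies $\langle x,z\rangle\ge\tfrac12(\|x\|^2-\|y\|^2)+\|y\|$; this rearranges to $\langle x,y\rangle\ge\|y\|^2+\|y\|\tfrac{\|x\|^2-\|y\|^2}{2}$, the first condition in the statement, and gives $d_\infty((x,y),\Pi(H))=1-\|y\|$ (the case $y=0$ falling trivially here). When that inequality fails one has $m>\|y\|^2-2\|y\|$, and at an optimal $z\in S_P$ the two affine expressions $\|x\|^2-2\langle x,z\rangle$ and $\|y\|^2-2\langle y,z\rangle$ must then be equal: if the second were strictly smaller at $z$, then $z$ would minimise the first over $S_P$, i.e.\ $z=x/\|x\|$, but $\|x\|\ge\|y\|$ and Cauchy--Schwarz make the second strictly larger there (since $x\neq y$); and if the first were strictly smaller, then $z=y/\|y\|$ and hence $m=\|y\|^2-2\|y\|$, contrary to assumption. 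Thus the optimal $z$ lies on the perpendicular bisector $\langle x-y,z\rangle=\tfrac12(\|x\|^2-\|y\|^2)$, which inside $P$ is the line through the midpoint $\tfrac{x+y}{2}$ in the direction orthogonal to $x-y$.

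To reach the closed formula I would write $z=\tfrac{x+y}{2}+\lambda w$, where $w$ is the vector of $P$ orthogonal to $x-y$ normalised so that $\|w\|=\|x-y\|$ and $\langle x,w\rangle>0$. A short computation gives $\langle x,w\rangle=\langle y,w\rangle=\sqrt{\|x\|^2\|y\|^2-\langle x,y\rangle^2}$, so the requirement $\|z\|^2=1$ becomes the quadratic equation
$$
\|x-y\|^2\lambda^2+2\sqrt{\|x\|^2\|y\|^2-\langle x,y\rangle^2}\;\lambda+\tfrac14\|x+y\|^2-1=0,
$$
whose discriminant, by the identity $\|x-y\|^2\|x+y\|^2-4\bigl(\|x\|^2\|y\|^2-\langle x,y\rangle^2\bigr)=(\|x\|^2-\|y\|^2)^2$, equals $4\|x-y\|^2-(\|x\|^2-\|y\|^2)^2$. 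Since $\|z-x\|^2=\tfrac14\|x-y\|^2+\lambda^2\|x-y\|^2$, the distance-minimising solution is the root of smaller absolute value, which is exactly the $\lambda$ in the statement; and then
$$
d_\infty((x,y),\Pi(H))^2=1+\|x\|^2-2\langle x,z\rangle=1-\langle x,y\rangle-2\lambda\sqrt{\|x\|^2\|y\|^2-\langle x,y\rangle^2},
$$
as desired. When $x$ and $y$ are linearly dependent, $\|x\|^2\|y\|^2-\langle x,y\rangle^2=0$: if they point in the same direction the first inequality holds (it reduces to $\|x\|+\|y\|\le2$), and if they point in opposite directions the second case applies and yields $d_\infty((x,y),\Pi(H))=\sqrt{1-\langle x,y\rangle}$ at once.

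I expect the main difficulty to be the bookkeeping: verifying that the feasibility and sign conditions dovetail exactly with the threshold $\langle x,y\rangle<\|y\|^2+\|y\|\tfrac{\|x\|^2-\|y\|^2}{2}$ separating the two cases, that in the second regime the bisector point really is the global minimiser and not one of $x/\|x\|$ or $y/\|y\|$, and that the algebraic identities quoted above are carried out correctly. The reduction from $S_H$ to $S_P$ is conceptually easy but still needs to be written with care. For instance, one must treat separately the possibility that $x$ and $y$ are antiparallel, in which case the minimum over $P$ may be attained in the interior of $B_P$.
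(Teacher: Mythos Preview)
Your proposal is correct and follows essentially the same route as the paper: reduce to the two-dimensional span of $x$ and $y$, observe that the minimiser on $S_P$ must be one of $x/\|x\|$, $y/\|y\|$, or the equidistant point $m$ on the perpendicular bisector, and in the second regime parametrise $m=\tfrac{x+y}{2}+\lambda w$ with $w\perp x-y$, $\|w\|=\|x-y\|$, to arrive at exactly the quadratic you wrote. The only cosmetic differences are that the paper carries out the reduction to two dimensions by an explicit geometric construction (intersecting $S_H$ with a plane orthogonal to the line through $x$ and $y$, and using the closest-point property of circles) rather than by your monotonicity argument, and it rules out $x/\|x\|$ and $y/\|y\|$ in the second case via an intermediate-value argument along the arc from one to the other rather than by your contradiction argument.
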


We will need the following easy observations.

\begin{lemma}\label{lemma:Hilbert-dim2}
Let $\alpha_0\in]-\pi,\pi]$, $a\geq0$, $b\geq0$, and let $f:[\alpha_0-\pi,\alpha_0+\pi]\longrightarrow \R$ be defined by
$$
f(\alpha)=\|(a\cos(\alpha_0),a\sin(\alpha_0))-(b\cos(\alpha),b\sin(\alpha))\|_2.
$$
If $ab>0$ then $f$ decreases in $[\alpha_0-\pi,\alpha_0]$ and increases in $[\alpha_0,\alpha_0+\pi]$. If $ab=0$ then $f$ is constant.
\end{lemma}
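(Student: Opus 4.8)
The plan is to reduce the statement to the elementary monotonicity of cosine by squaring. First I would expand the squared Euclidean norm: since the two vectors $(a\cos\alpha_0,a\sin\alpha_0)$ and $(b\cos\alpha,b\sin\alpha)$ have norms $a$ and $b$ and their inner product equals $ab(\cos\alpha_0\cos\alpha+\sin\alpha_0\sin\alpha)=ab\cos(\alpha-\alpha_0)$ by the cosine addition formula, one gets
$$
f(\alpha)^2=a^2+b^2-2ab\cos(\alpha-\alpha_0).
$$
If $ab=0$, the right-hand side is the constant $a^2+b^2$, independent of $\alpha$, so $f$ is constant; this disposes of the second assertion.

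If $ab>0$, I would set $g(t)=a^2+b^2-2ab\cos t$ for $t\in[-\pi,\pi]$, so that $f(\alpha)^2=g(\alpha-\alpha_0)$. Then $g'(t)=2ab\sin t$, which is $\le 0$ on $[-\pi,0]$ and $\ge 0$ on $[0,\pi]$, and strictly so on the open subintervals because $ab>0$; hence $g$ is (strictly) decreasing on $[-\pi,0]$ and (strictly) increasing on $[0,\pi]$. Translating by $\alpha_0$, the function $f^2$ decreases on $[\alpha_0-\pi,\alpha_0]$ and increases on $[\alpha_0,\alpha_0+\pi]$, and since $f\ge 0$ and $t\mapsto\sqrt{t}$ is increasing on $[0,\infty)$, the same monotonicity transfers to $f$ itself. (If one prefers to avoid differentiation, one can just invoke that $\cos$ is even and decreasing on $[0,\pi]$.)

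There is essentially no obstacle here; the only point requiring a word of care is that, as $\alpha$ runs over $[\alpha_0-\pi,\alpha_0+\pi]$, the shifted variable $\alpha-\alpha_0$ runs over exactly $[-\pi,\pi]$, so the sign analysis of $\sin$ (equivalently, the monotonicity intervals of $\cos$) covers the whole domain and no periodicity ambiguity enters. This is why the lemma is stated on an interval of length $2\pi$ centered at $\alpha_0$ rather than on all of $\mathbb{R}$.
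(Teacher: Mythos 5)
Your proposal is correct and follows essentially the same route as the paper: square $f$, use the law of cosines to write $f^2(\alpha)=a^2+b^2-2ab\cos(\alpha-\alpha_0)$, and read off the monotonicity from that of the cosine (the paper states this more tersely, leaving the derivative check implicit). Your extra remarks on the $ab=0$ case and on the domain being exactly a $2\pi$-interval centered at $\alpha_0$ are fine but add nothing beyond what the paper's one-line argument already covers.
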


\begin{proof}Only the case $ab>0$ needs an explanation.
Taking into account that $f^2(\alpha)=a^2+b^2-2ab\cos(\alpha_0-\alpha)$, it suffices to observe that
$ab\cos(\alpha_0-\alpha)$ increases in $[\alpha_0-\pi,\alpha_0]$ and decreases in $[\alpha_0,\alpha_0+\pi]$.
\end{proof}

\begin{remark}\label{remark:minimum-distance-point-cfa}
Lemma~\ref{lemma:Hilbert-dim2} is telling us in particular that, given a circle $C$ and a point $x$ in the same plane which is not the center of $C$, the minimum distance from $x$ to $C$ is attained at the intersection point of $C$ and the half-line starting at the center of $C$ which passes through $x$.
\end{remark}

\begin{proof}[Proof of Theorem~\ref{thm:hilbert2}] If $y=0$ we have to show that $d_\infty\big((x,y),\Pi(H)\big)=1$, but this clear since, obviously,  $d_\infty((x,0),(\frac{x}{\|x\|},\frac{x}{\|x\|}))\leq 1$ and every $h\in S_H$ satisfies $d_\infty((x,0),(h,h))\geq \|h\|= 1$. So we can set $y\neq0$ for the rest of the proof.

In the next step we show that we can reduce the problem to the 2-dimensional case. Let $X$ be the 2-dimensional subspace of $H$ containing $x$ and $y$. We claim that $d_\infty\big((x,y),\Pi(H)\big)=d_\infty\big((x,y),\Pi(X)\big)$. Indeed, since $\Pi(X)\subset \Pi(H)$ the inequality $d_\infty\big((x,y),\Pi(H)\big)\leq d_\infty\big((x,y),\Pi(X)\big)$ is evident. To prove the reversed inequality, fixed $h\in S_H$, consider the plane $P$ which contains $h$, intersects $X$ in a line and which is orthogonal to the line containing $x$ and $y$. Set $h_X\in X$ to be the intersection point of $P$ and the line containing $x$ and $y$. We observe that $P\cap S_H$ is a circle which contains $h$ and we write $\widetilde h_X$ to denote the intersection point of $P\cap S_H$ and the half-line starting at the centre of $P\cap S_H$ and containing $h_X$. If $h_X$ happens to be the centre of $P\cap S_H$, any of the two points in $P\cap S_H \cap X$ can be taken as $\widetilde h_X$. By Remark~\ref{remark:minimum-distance-point-cfa} we have that $\|h-h_X\|\geq \|\widetilde h_X- h_X\|$. Finally, using the orthogonality between $P$ and the line containing $x$ and $y$, we can write
$$
\|x-h\|=(\|x-h_X\|^2+\|h_X-h\|^2)^{1/2}\geq(\|x-h_X\|^2+\|h_X-\widetilde h_X\|^2)^{1/2}=\|x-\widetilde h_X\|
$$
and, similarly $\|y-h\|\geq \|y-\widetilde h_X\|$. Therefore, we get $d_\infty((x,y),(h,h))\geq d_\infty\big((x,y),\Pi(X)\big)$
and taking infimum for $h\in S_H$ we obtain the desired inequality. Thus, we can suppose that $H$ is 2-dimensional and we can identify it with $(\R^2,\|\cdot\|_2)$.

Set $\widetilde x=\frac{x}{\|x\|}$ and $\widetilde y=\frac{y}{\|y\|}$. Of the two points in $S_H$ whose distances to $x$ and $y$ are equal, let $m$ be the one that minimizes that distance. We claim that $d_\infty\big((x,y),\Pi(X)\big)$ is attained at one of the three pairs $(\widetilde x,\widetilde x),(\widetilde y,\widetilde y)$ or $(m,m)$. Indeed, for $h\in S_H$ denote $f_x(h) = \|x - h\|$, $f_y(h) = \|y - h\|$, and $f(h) = \max\{f_x(h), f_y(h)\}$. It is clear that $f$ attains its minimum, say that it does at $h_0\in S_H$. Then $h_0$ must be either a point of local minimum of $f_x$, or  a point of local minimum of $f_y$, or it satisfies $f_x(h_0) = f_y(h_0)$. Lemma~\ref{lemma:Hilbert-dim2} tells us that the only local minimum for $f_x$ is $\widetilde{x}$ and the only local minimum for $f_y$ is $\widetilde{y}$. So $h_0$ must one of the following four points: $\widetilde x$, $\widetilde y$, $m$ and the remaining point $p$ of $S_H$ whose distances to $x$ and $y$ are equal, but for sure $f(p)$ is not the minimal value, so we omit this possibility.

To obtain the value of $d_\infty\big((x,y),\Pi(X)\big)$ we have to determine which is the suitable pair among $(\widetilde x,\widetilde x),(\widetilde y,\widetilde y)$, and $(m,m)$. We distinguish two cases depending on the value of $\langle x, y \rangle $:

If $\langle x , y \rangle \geq \|y\|^2+\|y\| \frac{\|x\|^2-\|y\|^2}{2}$ then
$$
\|y-\widetilde{y}\|^2=(1-\|y\|)^2\geq \|x\|^2+1-\frac{2}{\|y\|}\langle x, y \rangle=\|x-\widetilde{y}\|^2
$$
which gives us that $\|y-\widetilde{y}\|\geq \|x-\widetilde{y}\|$, and so $d_\infty\big((x,y),(\widetilde{y},\widetilde{y})\big)=\|y-\widetilde{y}\|=1-\|y\|$. On the other hand, Remark~\ref{remark:minimum-distance-point-cfa} tells us that $\|y-\widetilde{y}\|\leq \text{dist}(y,S_H)$. Therefore, we can write
$$
d_\infty\big((x,y),(\widetilde{y},\widetilde{y})\big)=\|y-\widetilde{y}\|\leq \text{dist}(y,S_H)\leq d_\infty((x,y), \Pi(H))\leq d_\infty\big((x,y),(\widetilde{y},\widetilde{y})\big),
$$
finishing the proof in this case.

Suppose otherwise that $\langle x , y \rangle < \|y\|^2+\|y\| \frac{\|x\|^2-\|y\|^2}{2}$\,. Then we obtain
$\|y-\widetilde{y}\|<\|x-\widetilde{y}\|$, and thus $d_\infty\big((x,y),(\widetilde{y},\widetilde{y})\big)=\|x-\widetilde{y}\|$. We observe that since $\|y\|\leq\|x\|$ and $\|x\|+\|y\|\leq 2$ we also have
$$
\langle x , y \rangle < \|y\|^2+\|y\| \frac{\|x\|^2-\|y\|^2}{2}\leq\|x\|^2+\|x\| \frac{\|y\|^2-\|x\|^2}{2}\,.
$$
Hence, we can deduce analogously that $\|x-\widetilde{x}\|<\|y-\widetilde{x}\|$ and so
$d_\infty\big((x,y),(\widetilde{x},\widetilde{x})\big)=\|y-\widetilde{x}\|$.

Let us check that in this case one has $d_\infty\big((x,y),(m, m)\big)\leq\min\big\{\|x-\widetilde{y}\|,\|y-\widetilde{x}\|\big\}$ and, therefore, $(m,m)$ is the suitable pair.
We start observing that, up to a rotation, we can assume without loss of generality that
$$
x=(a_x\cos(\alpha_x), a_x\sin(\alpha_x))\qquad  \text{and} \qquad  y=(a_y\cos(\alpha_y), a_y\sin(\alpha_y)),
$$
where $a_x> 0$, $a_y> 0$, $\alpha_x, \alpha_y\in [0,\pi]$, and $\alpha_x \leq \alpha_y$.
Then, by Lemma~\ref{lemma:Hilbert-dim2}, the function $f_x:[\alpha_x,\alpha_y]\longrightarrow \R$ given by $f_x(\alpha)=\|(a_x\cos(\alpha_x),a_x\sin(\alpha_x))-(\cos(\alpha), \sin(\alpha))\|$ is increasing and the function $f_y:[\alpha_x,\alpha_y]\longrightarrow \R$ given by $f_y(\alpha)=\|(a_y\cos(\alpha_y),a_y\sin(\alpha_y))-(\cos(\alpha), \sin(\alpha))\|$ is decreasing. Besides, we have that
$$
f_x(\alpha_x)=\|x-\widetilde x\|<\|y-\widetilde x\|=f_y(\alpha_x) \qquad \text{and} \qquad f_y(\alpha_y)=\|y-\widetilde y\|<\|x-\widetilde y\|=f_x(\alpha_y).
$$
So there is $\alpha_1\in (\alpha_x,\alpha_y)$ satisfying $f_x(\alpha_1)=f_y(\alpha_1)$. Obviously one has that $m=(\cos(\alpha_1),\sin(\alpha_1))$,
$$
\|x-m\|=f_x(\alpha_1)=f_y(\alpha_1)<f_y(\alpha_x)= \|y-\widetilde x\|,\qquad \text{and} \qquad \|y-m\|=f_x(\alpha_1)<f_x(\alpha_y)=\|x-\widetilde y\|.
$$
We finish the proof computing $d_\infty\big((x,y),(m,m)\big)$. To this end, we write $x=(x_1,x_2)$, $y=(y_1,y_2)$, and $z=(y_2-y_1,x_2-x_1)$ which is orthogonal to $x-y$ and obviously satisfies $\|z\|=\|x-y\|$. We can assume without loss of generality (exchanging $z$ by $-z$ if necessary) that $\langle x-y, z \rangle \geq 0$. With this notation we can write $m=\frac{x+y}{2}+\lambda z$ for suitable $\lambda$ that we have to compute. Since $m$ must be in $S_H$ we obtain the following equation for $\lambda:$
$$
1=\|m\|^2=\frac{\|x+y\|^2}{4}+\lambda\langle x+y,z \rangle +\lambda^2\|x-y\|^2.
$$
Besides, observe that
$$
\langle x+y,z \rangle^2=4(x_1y_2-x_2y_1)^2=4(\|x\|^2\|y\|^2-\langle x, y \rangle^2)
$$
and, therefore,
\begin{equation}\label{eq:lambda}
1=\frac{\|x+y\|^2}{4}+2\lambda\sqrt{\|x\|^2\|y\|^2-\langle x, y \rangle^2} +\lambda^2\|x-y\|^2.
\end{equation}
Observe further that
$$
\|y-m\|^2=\|x-m\|^2=\left\|\frac{x-y}{2}+\lambda z\right\|^2=\frac{\|x-y\|^2}{4}+\lambda^2\|x-y\|^2.
$$
Hence, we have to pick $\lambda$ to be the solution of \eqref{eq:lambda} which has smaller modulus, that is:
$$
\lambda=\frac{-2\sqrt{\|x\|^2\|y\|^2-\langle x, y \rangle^2}+\sqrt{4\big(\|x\|^2\|y\|^2-\langle x, y \rangle^2\big)+4\|x-y\|^2-\|x+y\|^2\|x-y\|^2}}{2\|x-y\|^2}\,.
$$
Taking into account that
\begin{align*}
\|x+y\|^2\|x-y\|^2&=\big(\|x\|^2+\|y\|^2+2\langle x, y \rangle\big)\big(\|x\|^2+\|y\|^2-2\langle x, y \rangle\big)=\big(\|x\|^2+\|y\|^2\big)^2-4\langle x, y \rangle^2
\end{align*}
we get
$$
4\big(\|x\|^2\|y\|^2-\langle x, y \rangle^2\big)-\|x+y\|^2\|x-y\|^2=-\big(\|x\|^2-\|y\|^2\big)^2
$$
This, together with $\|x-y\|^2=\|x\|^2+\|y\|^2-2\langle x, y\rangle$, gives the expected value for $\lambda$. Finally, using \eqref{eq:lambda} we obtain
\begin{align*}
d_\infty\big((x,y),(m,m)\big)=\|x-m\|&=\sqrt{\frac{\|x-y\|^2}{4}+\lambda^2\|x-y\|^2}  \\
&=\sqrt{\frac{\|x-y\|^2}{4}+1-\frac{\|x+y\|^2}{4}-2\lambda\sqrt{\|x\|^2\|y\|^2-\langle x, y \rangle^2}}\\
&=\sqrt{1-\langle x, y \rangle-2\lambda\sqrt{\|x\|^2\|y\|^2-\langle x, y \rangle^2}}
\end{align*}
which finishes the proof.
\end{proof}

We may rewrite Theorem~\ref{thm:hilbert2} to provide the following computation of $\Phi_H(\mu,\theta,\delta)$.

\begin{corollary}
Let $H$ be a real Hilbert space with $\dim(H)\geq 2$, $\delta\in (0,2)$, and $\mu,\theta \in [0,1]$ satisfying  $\mu\geq \theta$ and $\mu\theta>1-\delta$. Then,
$$
\Phi_H(\mu,\theta,\delta)=\begin{cases}
1-\theta & \text{ if } 1-\delta \geq \theta^2+\theta \frac{\mu^2-\theta^2}{2}, \\
\max\left\{1-\theta,\sqrt{\delta-2\lambda_\delta\sqrt{\mu^2\theta^2-(1-\delta)^2}}\right\} & \text{ if } 1-\delta < \theta^2+\theta \frac{\mu^2-\theta^2}{2},
\end{cases}
$$
where
$$
\lambda_\delta=\frac{-2\sqrt{\mu^2\theta^2-(1-\delta)^2}+\sqrt{4\big(\mu^2+\theta^2-2+2\delta\big)-(\mu^2-\theta^2)^2}}{2\big(\mu^2+\theta^2-2+2\delta\big) }\,.
$$
\end{corollary}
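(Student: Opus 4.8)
The plan is to transfer the problem to the Hilbert picture and reduce the two‑parameter supremum defining $\Phi_H(\mu,\theta,\delta)$ to a one‑variable optimization over the value of the inner product. Identifying $H^*$ with $H$ we have $\Pi(H)=\{(z,z):z\in S_H\}$ and $x^*(x)=\langle x,y\rangle$, so that
$$
\Phi_H(\mu,\theta,\delta)=\sup\bigl\{d_\infty\big((x,y),\Pi(H)\big)\ :\ \|x\|=\mu,\ \|y\|=\theta,\ \langle x,y\rangle\geq 1-\delta\bigr\}.
$$
Any two pairs with the same norms and the same inner product are carried one onto the other by a linear isometry of $H$, and such an isometry preserves both $\Pi(H)$ and $d_\infty$; hence $d_\infty((x,y),\Pi(H))$ depends only on $\mu$, $\theta$ and $t:=\langle x,y\rangle$, and Theorem~\ref{thm:hilbert2} provides its value $g(t)$ in closed form. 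Since $\dim(H)\geq 2$, every $t\in[-\mu\theta,\mu\theta]$ is attained (rotate $y$ relative to $x$), so
$$
\Phi_H(\mu,\theta,\delta)=\sup\bigl\{g(t)\ :\ \max\{1-\delta,-\mu\theta\}\leq t\leq\mu\theta\bigr\}.
$$

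The crux is to show that $g$ is non-increasing on $[-\mu\theta,\mu\theta]$; then the supremum is attained at the left endpoint $t_0:=\max\{1-\delta,-\mu\theta\}$. I would prove this through the two-dimensional reduction used in the proof of Theorem~\ref{thm:hilbert2}: normalizing, take $x=(\mu,0)$ and $y=\theta(\cos\phi,\sin\phi)$ with $\phi\in[0,\pi]$ and $\cos\phi=t/(\mu\theta)$, so that decreasing $t$ amounts to increasing $\phi$, and $g=\min_{\alpha}\max\{f_x(\alpha),f_y(\alpha)\}$ with $f_x(\alpha)=\|x-(\cos\alpha,\sin\alpha)\|$, $f_y(\alpha)=\|y-(\cos\alpha,\sin\alpha)\|$. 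For $\phi_1\leq\phi_2$, writing $f_{y_i}$ for the function $f_y$ with $y$ placed at angle $\phi_i$, Lemma~\ref{lemma:Hilbert-dim2} gives $f_{y_2}(\alpha)\geq f_{y_1}(\alpha)$ for all $\alpha\in[0,\phi_1]$ (because $\cos$ decreases on $[0,\pi]$), while $f_x$ is increasing on $[0,\pi]$; the minimizer for $\phi_2$ lies in $[0,\phi_2]$ by the structure established in the proof of Theorem~\ref{thm:hilbert2}, and a short case split on whether it lies in $[0,\phi_1]$ or in $(\phi_1,\phi_2]$ — using in the second case that $g(\phi_1)\leq\max\{f_x(\phi_1),1-\theta\}$ and that $g(\phi_i)\geq\operatorname{dist}(y_i,S_H)=1-\theta$ — yields $g(\phi_2)\geq g(\phi_1)$. (Alternatively one can differentiate the closed expression of Theorem~\ref{thm:hilbert2} in $t$, but this is messier.)

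It remains to identify $g(t_0)$. When $1-\delta\geq-\mu\theta$ — the situation the displayed formula addresses — one has $t_0=1-\delta$, and substituting $\langle x,y\rangle=1-\delta$ into Theorem~\ref{thm:hilbert2}, using $\mu^2+\theta^2-2(1-\delta)=\mu^2+\theta^2-2+2\delta$, reproduces $1-\theta$ in the range $1-\delta\geq\theta^2+\theta\frac{\mu^2-\theta^2}{2}$ and $\sqrt{\delta-2\lambda_\delta\sqrt{\mu^2\theta^2-(1-\delta)^2}}$ in the complementary range; moreover, since $g$ is continuous, non-increasing, and equal to $1-\theta$ at the threshold $t^*=\theta^2+\theta\frac{\mu^2-\theta^2}{2}$, in the second range $g(1-\delta)\geq 1-\theta$, so the value may be written as the stated maximum, which also absorbs the degenerate cases ($\theta=0$, or $x,y$ (anti)parallel; if $\delta>1+\mu\theta$ the constraint is vacuous and $\Phi_H(\mu,\theta,\delta)=g(-\mu\theta)=\sqrt{1+\mu\theta}$). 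The main obstacle is the monotonicity of $g$ in $t$; the rest is the reduction to one variable and the algebraic substitution into Theorem~\ref{thm:hilbert2}.
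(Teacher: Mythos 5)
Your proposal is correct and follows essentially the same route as the paper: reduce to the two-dimensional, one-parameter situation, use Lemma~\ref{lemma:Hilbert-dim2} together with the structure of the minimizers from the proof of Theorem~\ref{thm:hilbert2} to show that the extremal pair with $\langle x,y\rangle=1-\delta$ is the worst case, and then substitute into Theorem~\ref{thm:hilbert2}. The only difference is one of packaging: you prove monotonicity of $t\mapsto d_\infty\big((x,y),\Pi(H)\big)$ and evaluate at the endpoint, while the paper compares each admissible pair directly with the extremal pair $(x,z)$ and proves matching upper and lower bounds, both arguments resting on the same angular case split and the same lemma.
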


\begin{proof}
Suppose first that $1-\delta \geq \theta^2+\theta \frac{\mu^2-\theta^2}{2}$ and fix an arbitrary pair $(x,y)\in H\times H$ with $\|x\|=\mu$, $\|y\|=\theta$ and $\langle x, y\rangle \geq 1-\delta$. Then, $\langle x, y\rangle\geq\theta^2+\theta \frac{\mu^2-\theta^2}{2}$ and Theorem~\ref{thm:hilbert2} gives $d_\infty\big((x,y),\Pi(H)\big)=1-\theta$, taking supremum in $(x,y)$ we obtain $\Phi_H(\mu,\theta,\delta)\leq 1- \theta$. The reversed inequality always holds by Remark~\ref{rem:lower-bound-Phi}.

Suppose now that $1-\delta< \theta^2+\theta \frac{\mu^2-\theta^2}{2}$. As we observed at the beginning of the proof of Theorem~\ref{thm:hilbert2}, we can suppose that $\dim(H)=2$ and so we can identify $H=(\R^2,\|\cdot\|_2)$. Fix an arbitrary pair $(x,y)\in H\times H$ with $\|x\|=\mu$, $\|y\|=\theta$ and $\langle x, y\rangle \geq 1-\delta$. Renaming $x$ and $y$ if necessary and using a suitable rotation, we can suppose without loss of generality that $x=(\mu,0)$ and $y=\theta(\cos(\alpha),\sin(\alpha))$ with $\alpha\in [0,\pi]$. Let $\alpha_1\in [0,\pi]$ be so that the point $z=\theta(\cos(\alpha_1),\sin(\alpha_1))$ satisfies $\langle x, z\rangle =\mu\theta\cos(\alpha_1)=1-\delta$. Observe that, in fact, one has $\alpha\in[0,\alpha_1]$.

Next, we write $\eps= \max\left\{1-\theta,\sqrt{\delta-2\lambda_\delta\sqrt{\mu^2\theta^2-(1-\delta)^2}}\right\}$
and we use Theorem~\ref{thm:hilbert2} for $x$ and $z$ to obtain
$$
d_\infty\big((x,z),\Pi(H)\big)= \sqrt{\delta-2\lambda_\delta\sqrt{\mu^2\theta^2-(1-\delta)^2}}.
$$
Let $\alpha_2\in [0,\pi]$ be so that the point $m=(\cos(\alpha_2),\sin(\alpha_2))$ satisfies
$$
\|x-m\|=\|z-m\|=\sqrt{\delta-2\lambda\sqrt{\mu^2\theta^2-(1-\delta)^2}}\,.
$$
If $\alpha\in [0,\alpha_2]$ then we can use Lemma~\ref{lemma:Hilbert-dim2} with $\alpha_0=0$, $a=\mu$, and $b=1$ to obtain that $\widetilde{y}=(\cos(\alpha),\sin(\alpha))$ satisfies
$$
\|x-\widetilde{y}\|=\big\|(\mu,0)-(\cos(\alpha),\sin(\alpha))\big\|\leq\big\|(\mu,0)-(\cos(\alpha_2),\sin(\alpha_2))\big\|=\|x-m\|
$$
and, therefore
$$
d_\infty\big((x,y),\Pi(H)\big)\leq \max\big\{\|x-\widetilde{y}\|,\|y-\widetilde{y}\|\big\}\leq\max\big\{\|x-m\|,1-\theta\big\}=\eps.
$$
If $\alpha\in[\alpha_2,\alpha_1]$ (obviously this case does not occur when $\alpha_2>\alpha_1$), we use Lemma~\ref{lemma:Hilbert-dim2} with $\alpha_0=\alpha_2$, $a=1$, and $b=\theta$ to obtain that
\begin{align*}
\|m-y\|&=\big\|(\cos(\alpha_2),\sin(\alpha_2))-(\theta\cos(\alpha),\theta\sin(\alpha))\big\|\\
&\leq\big\|(\cos(\alpha_2),\sin(\alpha_2))-(\theta\cos(\alpha_1),\theta\sin(\alpha_1))\big\|=\|m-z\|.
\end{align*}
This allows us to write
$$
d_\infty\big((x,y),\Pi(H)\big)\leq \max\big\{\|x-m\|,\|y-m\|\big\}\leq\max\big\{\|x-m\|,\|z-m\|\big\}\leq \eps.
$$
So, for every  $(x,y)\in H\times H$ with $\|x\|=\mu$, $\|y\|=\theta$ and $\langle x, y\rangle \geq 1-\delta$ we have $d_\infty\big((x,y),\Pi(H)\big)\leq\eps$ and, therefore, $\Phi_H(\mu,\theta,\delta)\leq \eps$. To prove the reversed inequality, it suffices to recall that $\Phi_H(\mu,\theta,\delta)\geq 1-\theta$ always holds and that
$\Phi_H(\mu,\theta,\delta)\geq d_\infty\big((x,z),\Pi(H)\big)=\sqrt{\delta-2\lambda_\delta\sqrt{\mu^2\theta^2-(1-\delta)^2}}$.
\end{proof}

\section[Estimation of the spherical modulus for uniformly non-square spaces]{Estimation of the spherical Bishop-Phelps-Bollob\'{a}s modulus for uniformly non-square spaces}\label{sec:non-square}

In \cite[Theorem~5.9]{C-K-M-M-R} it is proved that for a uniformly non-square space $X$ and $\delta\in(0,\frac12)$ one has
\begin{equation*}
\Phi_X^S(\delta) < \sqrt{2\delta}.
\end{equation*}
The proof of this fact is involved and it is impossible to extract from it any better estimate for $\Phi_X^S(\delta)$. In this section we obtain a smaller upper bound for $\Phi_X^S(\delta)$ by means of a parameter that measures the uniformly non-squareness of the space $X$. We recall that uniformly non-square spaces were introduced by James \cite{S4} as those spaces whose two-dimensional subspaces are uniformly separated from $\ell_1^{(2)}$.
The main result of \cite{S4} -- the reflexivity of uniformly non-square spaces -- was the origin of the theory of superreflexive spaces. Basing on James results one can prove even more: if $E$ is an arbitrary two-dimensional space and $X$ has the property that two-dimensional subspaces of $X$ are uniformly separated from $E$, then $X$ is reflexive \cite{Kad1}.

Recall that a Banach space $X$ is \emph{uniformly non-square} if and only if there is $\alpha>0$ such that
$$
\fr{1}{2}(\|x+y\|+\|x-y\|)\leq 2-\alpha
$$
for all $x, y \in B_X$. The \emph{parameter of uniform non-squareness} of $X$, which we denote $\alpha(X)$, is the best possible value of $\alpha$ in the above inequality. In other words,
$$
\alpha(X) := 2-\underset{x,y\in B_X}{\sup}\lf\fr{1}{2}(\|x+y\|+\|x-y\|)\rt.
$$
With this notation $X$ is uniformly non-square if and only if  $\alpha(X)>0$.

In the next result we obtain an upper bound for the parameter of uniform non-squareness.

\begin{prop}
$\alpha(X)\leq 2-\sqrt{2}$ for every Banach space $X$.
\end{prop}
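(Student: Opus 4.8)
The statement is equivalent to the inequality
$$
\sup_{x,y\in B_X}\frac{1}{2}\bigl(\|x+y\|+\|x-y\|\bigr)\ \geq\ \sqrt{2},
$$
so the plan is to produce, in any Banach space $X$ with $\dim X\geq 2$, a pair attaining this lower bound. Since the supremum above cannot increase when passing from $X$ to a subspace, it suffices to exhibit such a pair inside a fixed two-dimensional subspace of $X$; thus we may and do assume $\dim X=2$.

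The first step is the soft existence lemma that \emph{there are $x,y\in S_X$ with $\|x+y\|=\|x-y\|$}. I would prove it by fixing an arbitrary $x\in S_X$ and considering the continuous function $g\colon S_X\longrightarrow\R$ given by $g(y)=\|x+y\|-\|x-y\|$, which is odd in the sense that $g(-y)=-g(y)$. As $S_X$ is connected, the intermediate value theorem provides a point where $g$ vanishes, which is exactly the pair we are after.

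Having fixed such a pair, I set $m:=\|x+y\|=\|x-y\|$ and note $m>0$ (otherwise $x=y=0$). If $m\geq\sqrt{2}$ we are immediately done, because $\frac{1}{2}(\|x+y\|+\|x-y\|)=m$. If instead $m<\sqrt{2}$, I rescale: the vectors $u:=(x+y)/m$ and $v:=(x-y)/m$ lie in $S_X$, while $u+v=2x/m$ and $u-v=2y/m$ both have norm $2/m$, so that $\frac{1}{2}(\|u+v\|+\|u-v\|)=2/m>\sqrt{2}$. In either case the supremum is at least $\sqrt{2}$, finishing the proof.

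The heart of the matter is simply the remark that $\max\{m,2/m\}\geq\sqrt{2}$ for every $m>0$; what makes it applicable is the self-dual behaviour of the equation $\|x+y\|=\|x-y\|$ under the substitution $(x,y)\mapsto(x+y,x-y)$, which turns the trivial bound $m\geq\|x\|=1$ into the useful $\max\{m,2/m\}\geq\sqrt{2}$. I do not anticipate any real obstacle: the only ingredient that needs a remark is the connectedness of $S_X$ (clear in dimension two) used in the intermediate value argument, everything else being the elementary rescaling above.
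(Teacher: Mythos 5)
Your argument is correct, and it takes a genuinely different route from the paper. The paper deduces the bound from the Day--Nordlander estimate for the modulus of convexity, $\delta_X(\eps)\leq 1-\sqrt{1-\eps^2/4}$, by rewriting $2-\alpha(X)$ as a supremum over $\eps$ of $\sup\{\frac{1}{2}\|x+y\|\}+\eps/2$ and optimizing; you instead give a self-contained elementary proof: an intermediate-value argument on the (connected) unit sphere of a two-dimensional subspace produces $x,y\in S_X$ with $\|x+y\|=\|x-y\|=m$, and the substitution $(x,y)\mapsto\bigl((x+y)/m,(x-y)/m\bigr)$ turns the trivial bound into $\max\{m,2/m\}\geq\sqrt{2}$, which is exactly what is needed since $\alpha(X)\leq 2-\sqrt{2}$ is equivalent to $\sup_{x,y\in B_X}\frac{1}{2}(\|x+y\|+\|x-y\|)\geq\sqrt{2}$. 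What each approach buys: yours avoids any external citation and is essentially the classical device behind James's $\sqrt{2}$ bound for non-squareness constants, so it is fully elementary; the paper's is shorter granted the cited Day--Nordlander theorem and makes explicit the relation between $\alpha(X)$ and the modulus of convexity, which is of independent interest. One small remark: both your proof and the paper's implicitly require $\dim X\geq 2$ (for $X=\R$ one computes $\alpha(\R)=1>2-\sqrt{2}$, and the sphere is disconnected, resp.\ the Day--Nordlander supremum degenerates), so your explicit reduction to a two-dimensional subspace is where this tacit hypothesis of the statement enters; this is not a defect relative to the paper.
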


\begin{proof}
According to the Day-Nordlander theorem \cite[p.~60]{S2}, the following estimate of the modulus of convexity $\delta_X(\varepsilon) = 1-\sup\lf\fr{\|x+y\|}{2}:x,y\in S_X,\|x-y\|=\varepsilon\rt$ of  an arbitrary Banach space $X$ holds true: $\delta_X(\varepsilon) \leq 1-\sqrt{1-\varepsilon^2/4}$.
Consequently, we can write

\begin{align*}
\alpha(X)&=2-\underset{x,y\in B_X}{\sup}\lf\fr{1}{2}(\|x+y\|+\|x-y\|)\rt \\
&=2-\underset{\eps\in (0,2]}{\sup}\left(\sup\lf\fr{\|x+y\|}{2}: x,y\in B_X, \|x-y\|=\eps\rt+\eps/2\right)\\
&\leq 2-\underset{\varepsilon\in (0,2]}{\sup}\left(\sup\lf\fr{\|x+y\|}{2}:x,y\in {S}_X, \|x-y\|=\varepsilon\rt+\varepsilon/2\right)  \\
&= 2-\underset{\varepsilon\in (0,2]}{\sup}\{1-\delta_X(\varepsilon)+\varepsilon/2\}\leq 2-\underset{\varepsilon\in (0,2]}{\sup}\lf\varepsilon/2+\sqrt{1-\varepsilon^2/4}\rt \\
&= 2 - \sqrt{2}.   \hfill \qedhere
\end{align*}
\end{proof}

\begin{prop}\label{prop:parameter-UNS-self-dual}
The parameter of uniform non-squareness is self-dual, i.e. $ \alpha(X)= \alpha(X^*) $ for every Banach space $X$.
\end{prop}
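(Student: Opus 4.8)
The plan is to reduce the equality to a single one-sided inequality and then bootstrap via the canonical embedding into the bidual. For a Banach space $Y$ write $J(Y):=\sup\bigl\{\tfrac12(\|u+v\|+\|u-v\|):u,v\in B_Y\bigr\}=2-\alpha(Y)$, so that the claim $\alpha(X)=\alpha(X^*)$ is equivalent to $J(X)=J(X^*)$. Since $X$ sits isometrically inside $X^{**}$ with $B_X\subseteq B_{X^{**}}$, one has $J(X)\leq J(X^{**})$ for free. Hence it suffices to prove the estimate $J(Z^*)\leq J(Z)$ for an \emph{arbitrary} Banach space $Z$: applying it to $Z=X$ and to $Z=X^*$ gives $J(X^*)\leq J(X)\leq J(X^{**})\leq J(X^*)$, forcing all four quantities to agree, and in particular $J(X)=J(X^*)$.

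To establish $J(X^*)\leq J(X)$, I would fix $x^*,y^*\in B_{X^*}$ and $\eps>0$, and choose $u,v\in B_X$ with $\re(x^*+y^*)(u)>\|x^*+y^*\|-\eps$ and $\re(x^*-y^*)(v)>\|x^*-y^*\|-\eps$; in the complex case one first picks almost-norming vectors and then multiplies each by a suitable unimodular scalar so that these two values become real. The key point is the elementary identity
$$
(x^*+y^*)(u)+(x^*-y^*)(v)=x^*(u+v)+y^*(u-v).
$$
Because $\|x^*\|\leq1$ and $\|y^*\|\leq1$, the right-hand side has real part at most $\|u+v\|+\|u-v\|\leq 2J(X)$, while the left-hand side is real and exceeds $\|x^*+y^*\|+\|x^*-y^*\|-2\eps$. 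Combining, letting $\eps\to0$, and taking the supremum over $x^*,y^*\in B_{X^*}$ yields $J(X^*)\leq J(X)$.

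I do not anticipate a genuine obstacle here: the argument is a short duality computation. The only things that need mild care are the passage to $\eps$-approximate norming vectors (everything is phrased via suprema, so exact norm attainment is not available) and, in the complex setting, the rotation by unimodular scalars that makes the diagonal sum $(x^*+y^*)(u)+(x^*-y^*)(v)$ genuinely real, so that it can be compared directly with the triangle-inequality bound $\|u+v\|+\|u-v\|$.
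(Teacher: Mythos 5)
Your proof is correct, but it runs in the opposite direction to the paper's and is in fact more elementary. The paper proves the one-sided inequality $\alpha(X^*)\le\alpha(X)$ (equivalently, $J(X)\le J(X^*)$ in your notation) by taking, for $x,y\in B_X$, supporting functionals $f,g$ at $x+y$ and $x-y$ and estimating $\|f+g\|+\|f-g\|\ge f(x+y)+g(x-y)$; it then applies this twice to get $\alpha(X^{**})\le\alpha(X^*)\le\alpha(X)$ and closes the loop by invoking James' theorem (a uniformly non-square space is reflexive) when $\alpha(X)>0$, the case $\alpha(X)=0$ being trivial. You instead prove the reverse inequality $J(X^*)\le J(X)$ directly, using $\eps$-norming vectors $u,v\in B_X$ for $x^*+y^*$ and $x^*-y^*$ together with the identity $(x^*+y^*)(u)+(x^*-y^*)(v)=x^*(u+v)+y^*(u-v)$, and you close the loop with the trivial monotonicity $J(X)\le J(X^{**})$ coming from the canonical isometric embedding; the only points needing care, as you note, are the $\eps$-approximation (exact norm attainment is unavailable in $X$) and the unimodular rotations in the complex case, and you handle both. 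Your route avoids James' reflexivity theorem altogether, so it is genuinely more elementary and self-contained. It is worth observing that your inequality combined with the paper's supporting-functional inequality yields $J(X)=J(X^*)$ at once, with no bidual step at all: the self-duality is precisely the equality $\|T\|=\|T^*\|$ for the operator $T(x,y)=(x+y,x-y)$ from $X\oplus_\infty X$ to $X\oplus_1 X$, whose adjoint is $(x^*,y^*)\mapsto(x^*+y^*,x^*-y^*)$; your argument is the half $\|T^*\|\le\|T\|$, while the paper's is the half $\|T\|\le\|T^*\|$.
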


\begin{proof}
For arbitrary $x,y\in B_X$  consider supporting functionals  $f,g$ at the points $x+y$ and $x-y$ respectively, i.e.  $f,g \in S_{X^*}$ satisfying  $f(x+y) = \|x +y\|$ and  $g(x-y) = \|x -y\|$. Then,
\begin{align*}
\|f + g\| + \|f - g\| &\ge (f+g)(x) + (f-g)(y) \\
 &= f(x+y) + g(x-y) =  \|x +y\| +  \|x -y\|.
\end{align*}
Hence, we get
\begin{align*}
  \underset{f,g\in B_{X^*}}{\sup}\lf \|f+g\|+\|f-g\| \rt \geq  \|x+y\|+\|x-y\|.
\end{align*}
Moving $x,y\in B_X$, we get $\alpha(X^*) \leq  \alpha(X)$. Substituting $X^*$ instead of $X$ we get $\alpha(X^{**}) \leq  \alpha(X^*)$. In the case of  $\alpha(X)>0$, the space is reflexive, and the above inequalities imply the desired equality  $\alpha(X)= \alpha(X^*)$. In the remaining case of $\alpha(X)=0$, we have $0=\alpha(X)\geq\alpha(X^*)\geq 0$, which finishes the proof.
\end{proof}

We are ready to present the promised result. The upper bound for $\Phi_X^S(\delta)$ that we give below does not pretend to be close to the sharp estimate that, unfortunately, we could not achieve.

\begin{theorem}\label{theorem:estimation-for-non-square}
Let $X$ be a Banach space with $\alpha(X^*)>\tilde\alpha > 0$.
Then,
$$
\Phi^S_X(\delta) \leq \sqrt{2\delta}\,\sqrt{1-\frac{1}{3}\tilde\alpha} \qquad \text{for } \delta \in \left(0, \frac{1}{2} - \frac{1}{6}\tilde\alpha \right)
$$
and
$$
\Phi^S_X(\delta) \leq {2\delta} \qquad \text{for } \delta \in \left( \frac{1}{2} - \fr{1}{6}\tilde\alpha,  \frac{1}{2} \right).
$$
Consequently, since $ \alpha(X)= \alpha(X^*) $ for every Banach space $X$, one has
$$
\Phi^S_X(\delta) \leq \sqrt{2\delta}\,\sqrt{1-\frac{1}{3}\alpha(X)} \qquad \text{for } \delta \in \left(0, \frac{1}{2} - \frac{1}{6}\alpha(X) \right)
$$
and
$$
\Phi^S_X(\delta) \leq {2\delta} \qquad \text{for } \delta \in \left( \frac{1}{2} - \fr{1}{6}\alpha(X),  \frac{1}{2} \right).
$$
\end{theorem}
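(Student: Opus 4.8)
The plan is to produce a first approximating pair via the Phelps lemma and then to sharpen the estimate for the functional coordinate using the uniform non-squareness of $X^*$. Fix $\delta\in(0,\tfrac12)$ and a pair $(x,x^*)\in S_X\times S_{X^*}$ with $\re x^*(x)>1-\delta$; we must find $(y,y^*)\in\Pi(X)$ near it. Put $r=\dfrac{1+\tilde\alpha}{1+2\tilde\alpha}$ (note $\tfrac12<r<1$, the upper bound because $\tilde\alpha<\alpha(X)\le 2-\sqrt2<1$) and $k=\min\{\tfrac12,\sqrt{\delta/(2r)}\}\in(0,\tfrac12]$. Applying Lemma~\ref{lemma:corolario2.2-phelps} to $C=B_X$, $z=x$, $z^*=x^*$, $\eta=\delta$ and this $k$ yields $\tilde y\in B_X$ and $\tilde y^*\in X^*$ with $\tilde y^*(\tilde y)=\sup\tilde y^*(B_X)=\|\tilde y^*\|$, $\|x-\tilde y\|<\delta/k$ and $\|x^*-\tilde y^*\|<k$. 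Since $\|\tilde y^*\|>\|x^*\|-k>0$, the identity $\tilde y^*(\tilde y)=\|\tilde y^*\|$ forces $\|\tilde y\|=1$, so $y:=\tilde y$ and $y^*:=\tilde y^*/\|\tilde y^*\|$ give $(y,y^*)\in\Pi(X)$ and $\|x-y\|<\delta/k$; what remains is to beat the classical bound $\|x^*-y^*\|<2k$.

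The extra ingredient is the non-homogeneous form of the defining inequality of $\alpha(X^*)$: for all $p,q\in X^*$, $\|p+q\|+\|p-q\|\le 2(\|p\|+\|q\|)-2\tilde\alpha\min\{\|p\|,\|q\|\}$. This is obtained by assuming $\|p\|\ge\|q\|>0$, writing $p\pm q=(1-t)p+t(p\pm\bar q)$ with $t=\|q\|/\|p\|\le1$ and $\bar q=q/t$, and feeding the unit vectors $p/\|p\|,\ \bar q/\|p\|$ into $\tfrac12(\|u+v\|+\|u-v\|)\le 2-\alpha(X^*)\le 2-\tilde\alpha$. Now set $s=\|\tilde y^*\|$, $a=\|x^*-\tilde y^*\|<k$, $b=|s-1|\le a<k\le\tfrac12$, $u=x^*-\tilde y^*$, $v=\tilde y^*-y^*=(s-1)y^*$, so $x^*-y^*=u+v$ with $\|u\|=a$, $\|v\|=b$. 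Because $\tilde y^*=sy^*$ we have $u-v=x^*-(2s-1)y^*$, and since $b<\tfrac12$ gives $|2s-1|\in\{1-2b,\,1+2b\}$, the reverse triangle inequality yields $\|u-v\|\ge\bigl|\,1-|2s-1|\,\bigr|=2b$. Plugging $p=u$, $q=v$ into the displayed inequality gives $\|x^*-y^*\|=\|u+v\|\le 2(a+b)-2\tilde\alpha b-\|u-v\|\le 2a-2\tilde\alpha b$; combined with $\|x^*-y^*\|\le a+b$ and the elementary fact that $\max_{0\le b'\le a}\min\{a+b',\,2a-2\tilde\alpha b'\}=2ar$ (the two expressions meet at $b'=a/(1+2\tilde\alpha)$), this gives $\|x^*-y^*\|<2kr$.

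Consequently $d_\infty\bigl((x,x^*),\Pi(X)\bigr)<\max\{\delta/k,\,2kr\}$. If $\sqrt{\delta/(2r)}\le\tfrac12$ (i.e.\ $\delta\le r/2$) then $k=\sqrt{\delta/(2r)}$ and $\delta/k=2kr=\sqrt{2r\delta}$, which is at most $\sqrt{2\delta}\sqrt{1-\tfrac13\tilde\alpha}$ by the elementary inequality $r\le 1-\tfrac13\tilde\alpha$ (valid for $0<\tilde\alpha<1$). If $\sqrt{\delta/(2r)}>\tfrac12$ (i.e.\ $\delta>r/2$) then $k=\tfrac12$ and $\max\{\delta/k,2kr\}=\max\{2\delta,r\}=2\delta$. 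Since $\tfrac12-\tfrac16\tilde\alpha>r/2$ (the difference is $\frac{\tilde\alpha(1-\tilde\alpha)}{3(1+2\tilde\alpha)}>0$) and $2\delta\le\sqrt{2\delta}\sqrt{1-\tfrac13\tilde\alpha}$ exactly when $\delta\le\tfrac12-\tfrac16\tilde\alpha$, we conclude: for $\delta\in(0,\tfrac12-\tfrac16\tilde\alpha)$ the quantity $\max\{\delta/k,2kr\}$ is $\le\sqrt{2\delta}\sqrt{1-\tfrac13\tilde\alpha}$ (first case if $\delta\le r/2$, second case together with the last observation otherwise), hence $\Phi^S_X(\delta)\le\sqrt{2\delta}\sqrt{1-\tfrac13\tilde\alpha}$; and for $\delta\in(\tfrac12-\tfrac16\tilde\alpha,\tfrac12)$ we are in the second case, so $\Phi^S_X(\delta)\le 2\delta$. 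The final two inequalities follow by applying the above to every $\tilde\alpha<\alpha(X)=\alpha(X^*)$ (Proposition~\ref{prop:parameter-UNS-self-dual}) and letting $\tilde\alpha\nearrow\alpha(X)$.

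The main obstacle is the sharpening of $\|x^*-y^*\|$: the obvious use of uniform non-squareness, namely $\|x^*+y^*\|+\|x^*-y^*\|\le 4-2\tilde\alpha$ for the unit functionals $x^*,y^*$, is worthless in this range of $\delta$ because $\|x^*+y^*\|$ cannot be forced close enough to $2$. What makes the argument work is the observation that the triangle inequality $\|x^*-y^*\|\le\|x^*-\tilde y^*\|+\|\tilde y^*-y^*\|$ is quantitatively loose exactly when the Phelps perturbation $\tilde y^*$ is nearly radial (nearly a positive multiple of $x^*$), which is precisely the situation controlled by the non-homogeneous non-squareness estimate. One should also watch the constraint $b<\tfrac12$ needed for the reverse triangle step: it forces $k\le\tfrac12$, and this is exactly what produces the two separate ranges of $\delta$ in the statement.
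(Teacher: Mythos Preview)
Your proof is correct and follows a genuinely different route from the paper's. Both arguments start identically: apply the Phelps lemma with a parameter $k\in(0,\tfrac12]$ to obtain $\tilde y^*$ with $\|x^*-\tilde y^*\|<k$, set $y^*=\tilde y^*/\|\tilde y^*\|$, and then work to improve the naive bound $\|x^*-y^*\|<2k$. The paper does this via its Lemma~\ref{main-lemma}: it builds, in two separate cases according to whether $\|\tilde y^*\|>1$ or $\|\tilde y^*\|\le 1$, an explicit auxiliary functional $v_0^*$ (roughly $\tfrac1k x^*\mp(\tfrac1k\pm1)y^*$), normalizes it, and applies the unit-vector non-squareness inequality to the pair $(y^*,v^*)$; after several estimates this yields $\|x^*-y^*\|\le 2k(1-\tfrac13\tilde\alpha)$. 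You instead derive once and for all the scaled inequality $\|p+q\|+\|p-q\|\le 2(\|p\|+\|q\|)-2\tilde\alpha\min\{\|p\|,\|q\|\}$ and apply it to the natural decomposition $x^*-y^*=(x^*-\tilde y^*)+(\tilde y^*-y^*)$, combining it with the lower bound $\|u-v\|\ge 2b$ coming from $u-v=x^*-(2s-1)y^*$. This avoids the case split entirely and produces the bound $\|x^*-y^*\|<2kr$ with $r=\dfrac{1+\tilde\alpha}{1+2\tilde\alpha}$, which is strictly smaller than the paper's $1-\tfrac13\tilde\alpha$ for $0<\tilde\alpha<1$; you then simply discard this gain to land on the stated constants. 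The trade-off is that the paper's choice of $k$ makes the two final bounds coincide exactly, so its case analysis for $\delta$ is immediate, whereas you must check the extra overlap interval $(r/2,\ \tfrac12-\tfrac16\tilde\alpha)$---which you do correctly.
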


Prior to provide the proof of the theorem, we recall that it obviously implies the commented result from \cite{C-K-M-M-R}.

\begin{corollary}[\mbox{\cite[Theorem~5.9]{C-K-M-M-R}}]
Let $X$ be a Banach space. Suppose that $\Phi_X^S(\delta)=\sqrt{2\delta}$ for some $\delta\in (0,1/2)$. Then $X^*$  is not uniformly non-square (i.e.\ $X^*$ (and $X$ as well) contains almost isometric copies of $\ell_\infty^{(2)}$).
\end{corollary}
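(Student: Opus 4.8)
The plan is to bound $d_\infty\big((x,x^*),\Pi(X)\big)$ for an arbitrary pair $(x,x^*)\in S_X\times S_{X^*}$ with $\eta:=1-\re x^*(x)\in(0,\delta)$, since $\Phi_X^S(\delta)$ is the supremum of these quantities. Apply Phelps' Lemma~\ref{lemma:corolario2.2-phelps} with $C=B_X$, $z^*=x^*$ and $z=x$ (so that $\sup z^*(C)=1\le z^*(z)+\eta$), and with a parameter $k\in(0,1)$ to be chosen later: one gets $\widetilde y^*\in X^*$ attaining its norm at some $\widetilde y\in B_X$ --- necessarily $\|\widetilde y\|=1$, since $\widetilde y^*\neq0$ and $\|\widetilde y^*\|=\widetilde y^*(\widetilde y)\le\|\widetilde y^*\|\,\|\widetilde y\|$ --- with $\|x-\widetilde y\|<\eta/k$ and $\|x^*-\widetilde y^*\|<k$. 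Setting $y:=\widetilde y$ and $y^*:=\widetilde y^*/\|\widetilde y^*\|$ gives $(y,y^*)\in\Pi(X)$ and $\|x-y\|<\eta/k$, while the crude estimate $\|x^*-y^*\|\le\|x^*-\widetilde y^*\|+\big|\,1-\|\widetilde y^*\|\,\big|<2k$ (using $\big|\,1-\|\widetilde y^*\|\,\big|=\big|\,\|x^*\|-\|\widetilde y^*\|\,\big|\le\|x^*-\widetilde y^*\|$) merely reproves $\Phi_X^S(\delta)\le\sqrt{2\delta}$ after choosing $k=\sqrt{\delta/2}$.

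The new point is to sharpen the functional estimate using $\alpha(X^*)>\tilde\alpha$. Write $s:=\|x^*-\widetilde y^*\|<k$, $v:=\tfrac1s(x^*-\widetilde y^*)\in S_{X^*}$ and $t:=\|\widetilde y^*\|$, so that $t\in[1-s,1+s]$, $x^*=t\,y^*+s\,v$, and hence $x^*-y^*=(t-1)\,y^*+s\,v$, of norm at most $|t-1|+s\le 2s$. This triangle estimate can be (nearly) attained only when $y^*$ and $v$ lie (nearly) on a common maximal face of $B_{X^*}$, a situation that is forced exactly at the endpoints $t=1\mp s$ by the corresponding equality cases of $\|x^*\|\le\|\widetilde y^*\|+\|x^*-\widetilde y^*\|$ (equivalently $\|\widetilde y^*\|\le\|x^*\|+\|x^*-\widetilde y^*\|$). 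Since every segment contained in $S_{X^*}$ has length at most $2-2\tilde\alpha$ --- if $\|f+g\|=2$ with $f,g\in S_{X^*}$ then $\|f-g\|\le 2(2-\tilde\alpha)-\|f+g\|=2-2\tilde\alpha$ by uniform non-squareness of $X^*$ --- one obtains a genuinely multiplicative improvement, namely
\[
\|x^*-y^*\|<2\Bigl(1-\tfrac{1}{3}\tilde\alpha\Bigr)\,k\qquad\text{provided }0<k\le\tfrac{1}{2}.
\]
I expect that establishing this inequality --- with a \emph{multiplicative} rather than an additive correction, and with the precise constant $\tfrac13$ --- will be the main obstacle: a careless application of uniform non-squareness only yields an additive constant of order $2-2\tilde\alpha$, useless for small $\delta$, and one must run the case analysis over $t\in[1-s,1+s]$ with care, bounding the intermediate values of $t$ as well as the endpoints.

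Granting this, one finishes by optimisation. Balancing $\eta/k$ against $2(1-\tfrac13\tilde\alpha)\,k$ leads to $k=\sqrt{\eta/\bigl(2(1-\tfrac{1}{3}\tilde\alpha)\bigr)}$, with common value $\sqrt{2\eta(1-\tfrac{1}{3}\tilde\alpha)}\le\sqrt{2\delta}\,\sqrt{1-\tfrac{1}{3}\tilde\alpha}$; this $k$ satisfies $k\le\tfrac12$ exactly when $\eta\le\tfrac12-\tfrac16\tilde\alpha$, which holds for every such pair as long as $\delta\le\tfrac12-\tfrac16\tilde\alpha$, giving the first displayed bound. For $\delta\in(\tfrac12-\tfrac16\tilde\alpha,\tfrac12)$ one instead takes $k=\tfrac12$, so that $\|x-y\|<2\eta<2\delta$ and $\|x^*-y^*\|<1-\tfrac{1}{3}\tilde\alpha<2\delta$, whence $d_\infty\big((x,x^*),\Pi(X)\big)<2\delta$; at the common endpoint $\delta=\tfrac12-\tfrac16\tilde\alpha$ both right-hand sides equal $1-\tfrac13\tilde\alpha$, so the two regimes match. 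Taking suprema over $(x,x^*)$ yields the two stated inequalities for $\Phi_X^S$.

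Finally, $\alpha(X)=\alpha(X^*)$ for every Banach space $X$ by Proposition~\ref{prop:parameter-UNS-self-dual}, so letting $\tilde\alpha\uparrow\alpha(X^*)=\alpha(X)$ gives the same bounds with $\alpha(X)$ in place of $\tilde\alpha$. In particular, if $\Phi_X^S(\delta)=\sqrt{2\delta}$ for some $\delta\in(0,\tfrac12)$, then necessarily $\alpha(X^*)=0$, i.e.\ neither $X^*$ nor $X$ is uniformly non-square; by James' theory this means that $X$ and $X^*$ contain almost isometric copies of $\ell_\infty^{(2)}$, which is the Corollary.
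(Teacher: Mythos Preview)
Your overall plan is the same as the paper's: the Corollary is an immediate consequence of Theorem~\ref{theorem:estimation-for-non-square}, and that theorem is obtained exactly as you describe --- apply Phelps' lemma with parameter $k$, normalise the resulting functional, sharpen the bound $\|x^*-y^*\|\le 2k$ to $\|x^*-y^*\|\le 2k(1-\tfrac13\tilde\alpha)$ using the uniform non-squareness of $X^*$, then balance against $\|x-y\|<\delta/k$. Your optimisation step and the deduction of the Corollary at the end are correct and match the paper.

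The gap is exactly where you locate it: the inequality $\|x^*-y^*\|<2(1-\tfrac13\tilde\alpha)k$ is asserted but not proved. Your heuristic (segments in $S_{X^*}$ have length $\le 2-2\tilde\alpha$, and the triangle estimate $\|(t-1)y^*+sv\|\le|t-1|+s$ is near-sharp only when $y^*,v$ nearly lie on a common face) points in the right direction but does not deliver a multiplicative factor, let alone the precise constant $\tfrac13$; as stated it would give at best an additive saving. The paper's Lemma~\ref{main-lemma} fills this step by an explicit construction rather than a face argument: in the two cases $\|\widetilde y^*\|>1$ and $\|\widetilde y^*\|\le 1$ one sets respectively
\[
v_0^*=\tfrac1k x^*-\bigl(1+\tfrac1k\bigr)y^*\quad\text{and}\quad v_0^*=\tfrac1k x^*+\bigl(1-\tfrac1k\bigr)y^*,
\]
normalises to $v^*\in S_{X^*}$, and shows in each case that
\[
\|y^*+v^*\|+\|y^*-v^*\|\ \ge\ \tfrac{3}{k}\,\|x^*-y^*\|-2.
\]
Combining with $\tfrac12(\|y^*+v^*\|+\|y^*-v^*\|)\le 2-\tilde\alpha$ yields $\|x^*-y^*\|\le 2k-\tfrac23 k\tilde\alpha$; the constant $\tfrac13$ thus comes from the coefficient $3/k$ in that combined lower bound, and the restriction $k\le\tfrac12$ is needed in Case~2 to get $\|y^*+v_0^*\|\ge 2$. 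Your vector $v=(x^*-\widetilde y^*)/s$ is not the one the paper uses, and it is not clear it would produce the same lower bound; to complete your argument you would need to carry out an analogous two-case computation.
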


The next result, which may be of independent interest, contains most of the difficulties in the proof of the Theorem~\ref{theorem:estimation-for-non-square}.

\begin{lemma} \label{main-lemma}
Let $X$ be a Banach space with $\alpha(X^*)> \tilde\alpha$. Then for every $\delta \in (0,2)$, every $(x,x^*)\in S_X\times S_{X^*}$ with $\re x^*(x)> 1-\delta$, and  every $ k \in (0, \frac12]$ there is a pair $(y,y^*)\in \Pi(X)$ such that
$$
\|x-y\|\leq \frac{\delta}{k} \qquad \text{and} \qquad \|x^*-y^*\|\leq 2k - \frac{2}{3}k\tilde\alpha\,.
$$
\end{lemma}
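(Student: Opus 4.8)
The plan is to run the Phelps‑type selection lemma once and then squeeze the improved functional estimate out of the uniform non‑squareness of $X^*$. First, I would apply Lemma~\ref{lemma:corolario2.2-phelps} with $C=B_X$, $z=x$, $z^*=x^*$, $\eta=\delta$ and parameter $k$ (legitimate since $k\in(0,\tfrac12]\subset(0,1)$): the required hypothesis $\sup x^*(B_X)=1\le \re x^*(x)+\delta$ holds, so we obtain $\tilde y^*\in X^*$ and $\tilde y\in B_X$ with $\tilde y^*(\tilde y)=\sup\tilde y^*(B_X)=\|\tilde y^*\|$, $\|x-\tilde y\|<\delta/k$ and $\|x^*-\tilde y^*\|<k$. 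Since $\|x^*-\tilde y^*\|<k\le\tfrac12$ we have $\|\tilde y^*\|>\tfrac12>0$, hence $\tilde y\in S_X$; we set $y:=\tilde y$ and $y^*:=\tilde y^*/\|\tilde y^*\|$, so that $(y,y^*)\in\Pi(X)$ and $\|x-y\|<\delta/k$. It then remains to bound $\|x^*-y^*\|$, and from here on $y$ itself is irrelevant: only the facts that $\tilde y^*$ attains its norm, that $\|x^*-\tilde y^*\|<k$, and that $\|x^*\|=1$ will be used. If $x^*=\tilde y^*$ then $\|\tilde y^*\|=1$, $y^*=x^*$ and there is nothing to prove; so assume $a:=\|x^*-\tilde y^*\|\in(0,k)$.

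Put $v:=(x^*-\tilde y^*)/a\in S_{X^*}$ and $s:=\|\tilde y^*\|-1$, so that $\tilde y^*=(1+s)y^*$ and $|s|=\bigl|\|\tilde y^*\|-\|x^*\|\bigr|\le a$. Since $x^*=(1+s)y^*+av$, one gets the exact identity $x^*-y^*=av+sy^*$. The crucial observation is that, rewriting $v\mp y^*=\tfrac1a\bigl(x^*-(1+s\pm a)y^*\bigr)$ and using $\|x^*\|=\|y^*\|=1$ together with $|s|\le a<\tfrac12$ (which makes $1+s\pm a>0$ in the relevant case), the reverse triangle inequality gives
$$\|v-y^*\|\ge 1+\frac{s}{a},\qquad \|v+y^*\|\ge 1-\frac{s}{a}.$$

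Next one invokes uniform non‑squareness: since $v,y^*\in S_{X^*}\subset B_{X^*}$, the definition of $\alpha(X^*)$ yields $\|v+y^*\|+\|v-y^*\|\le 4-2\alpha(X^*)<4-2\tilde\alpha$, so combining with the two lower bounds we get $\|v+y^*\|<3-2\tilde\alpha-\tfrac{|s|}{a}$ when $s\ge0$ and $\|v-y^*\|<3-2\tilde\alpha-\tfrac{|s|}{a}$ when $s<0$. Using this bound in the decomposition $x^*-y^*=av+sy^*=|s|\,(v\pm y^*)+(a-|s|)\,v$ ($+$ for $s\ge0$, $-$ for $s<0$) together with the triangle inequality and $0\le|s|\le a$, we obtain in both cases
$$\|x^*-y^*\|<|s|\Bigl(3-2\tilde\alpha-\frac{|s|}{a}\Bigr)+(a-|s|)=a+2|s|(1-\tilde\alpha)-\frac{|s|^{2}}{a}.$$
The right‑hand side is a downward parabola in $|s|$ whose maximum over $[0,a]$, since $0<\tilde\alpha<1$, is attained at the interior point $|s|=a(1-\tilde\alpha)$ and equals $a\bigl(1+(1-\tilde\alpha)^{2}\bigr)$. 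Hence $\|x^*-y^*\|<k\bigl(1+(1-\tilde\alpha)^{2}\bigr)$, and since $(1-\tilde\alpha)^{2}\le 1-\tfrac23\tilde\alpha$ holds for every $\tilde\alpha\le\tfrac43$ — and here $\tilde\alpha<\alpha(X^*)\le 2-\sqrt2$ — we conclude $\|x^*-y^*\|<2k-\tfrac23k\tilde\alpha$, which together with $\|x-y\|<\delta/k$ finishes the proof.

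The step I expect to be the main difficulty is finding the right way to bring uniform non‑squareness in. The naive bound $\|x^*-y^*\|\le\|x^*-\tilde y^*\|+\bigl|\|\tilde y^*\|-1\bigr|<2k$ reproduces only the classical estimate, and combining it with the estimate obtained from $\|x^*+y^*\|\ge 2-(1-\re x^*(y))$ via $\alpha(X^*)$ still falls short. What makes the argument work is to renormalise $x^*-\tilde y^*$ to the unit vector $v$, notice the identity $x^*-y^*=av+sy^*$, and — this is precisely where $\|x^*\|=1$ finally enters — feed the reverse‑triangle lower bound on $\|v\mp y^*\|$ into the non‑squareness inequality, after which $\|x^*-y^*\|$ is controlled by a one‑variable function of $|s|$ that is simply optimised; one also has to verify that the sign of $s=\|\tilde y^*\|-1$ is absorbed symmetrically into the two cases.
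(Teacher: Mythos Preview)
Your argument is correct, and in fact it yields the slightly sharper intermediate estimate $\|x^*-y^*\|<k\bigl(1+(1-\tilde\alpha)^2\bigr)$ before you relax it to $2k-\tfrac23 k\tilde\alpha$. The route, however, is genuinely different from the paper's. The paper also starts from Phelps' lemma to produce $y$ and $y_0^*$, but then splits into the two cases $\|y_0^*\|>1$ and $\|y_0^*\|\le 1$ and in each case builds an auxiliary vector $v_0^*=\tfrac1k x^*\mp(1\pm\tfrac1k)y^*$, normalises it to $v^*\in S_{X^*}$, and derives \emph{lower} bounds for both $\|y^*+v^*\|$ and $\|y^*-v^*\|$ directly in terms of $\|x^*-y^*\|/k$; plugging these into $\|y^*+v^*\|+\|y^*-v^*\|\le 4-2\tilde\alpha$ and solving gives the stated bound. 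Your approach instead normalises $x^*-\tilde y^*$ itself to obtain $v\in S_{X^*}$, records the exact decomposition $x^*-y^*=av+sy^*$ with $a=\|x^*-\tilde y^*\|$ and $s=\|\tilde y^*\|-1$, uses reverse triangle inequalities (exploiting $\|x^*\|=1$) to bound $\|v\mp y^*\|$ from below, feeds one of these into the non-squareness inequality, and then optimises the resulting one-variable expression in $|s|$. This handles the sign of $s$ symmetrically without an explicit case construction and is arguably cleaner; the paper's version, on the other hand, avoids the optimisation step and makes the dependence on $\|x^*-y^*\|$ explicit from the outset.
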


\begin{proof}
Fixed $(x,x^*)\in S_X\times S_{X^*}$ with $\re x^*(x)> 1-\delta$, we use Lemma~\ref{lemma:corolario2.2-phelps} for $C=B_X$ and $\eta=\delta$ to find
$y_0^*\in X^*$ and $y \in Y$ such that
$$
\|y\| = 1, \quad y_0^*(y) = \| y_0^*\|, \quad  \|x-y\|\leq\frac{\delta}{k}\quad \text{and} \quad \|x^*-{y_0}^* \| \leq k.
$$
Denoting $y^*=\frac{{y_0}^*}{\|{y_0}^*\|}$ one obviously has $(y, y^*)\in \Pi(X)$,
\begin{equation}\label{eq:Lemma-UNS-|1-y_0^*|small}
\|y^*-y_0^*\|=\bigl\vert1-\|{y_0}^*\| \bigr\vert\leq\|x^*-y_0^*\| \leq k, \qquad \text{and}
\end{equation}
\begin{align}\label{eq:Lemma-UNS-|1-y_0^*|big}
 \bigl\vert 1-\|{y_0}^*\| \bigr\vert &= \|y^*-{y_0}^*\| = \|y^*-x^*+x^*-{y_0}^*\| \geq \|x^*-y^*\|-k.
\end{align}
Besides, it is clear that
$$
\|x^*-y^*\|\leq \|x^*-y^*_0\|+\|y^*_0-y^*\|\leq 2k
$$
and
$$
\|x^*-{y_0}^*\|\geq \|x^*-y^*\|-\|y^*-{y_0}^*\|\geq\|x^*-y^*\|-k.
$$
On the other hand, since $\alpha(X^*)>\tilde\alpha$, we have
\begin{equation}\label{eq:thm-UNS-y^*-v^*}
\fr{1}{2}(\|y^*+v^*\|+\|y^*-v^*\|) \leq 2 - \tilde\alpha
\end{equation}
for every $v^* \in B_{X^*}$. In order to prove the lemma, we need to find a suitable $v^*\in S_{X^*}$ that allows us to estimate $\|x^*-y^*\|$.
We consider two cases separately.

\emph{Case 1}: $\|{y_0}^*\|>1$.\newline
Define in this case ${v_0}^*=\frac{1}{k}x^*-(1+\frac{1}{k})y^*$ which clearly satisfies $\|{v_0}^*\|\geq 1$. Using that $\|{y_0}^*\|>1$, \eqref{eq:Lemma-UNS-|1-y_0^*|small}, and \eqref{eq:Lemma-UNS-|1-y_0^*|big} we get
$$
\big\vert k+1-\|{y_0}^*\|\big\vert=\big|k-(\|y_0^*\|-1)\big|=k-\big|\|y_0^*\|-1\big|\leq 2k-\|x^*-y^*\|
$$
and, therefore,
$$
\left\|{v_0}^*-\frac{x^*-y_0^*}{k}\right\|=\left\|-\left(1+\fr{1}{k}\right)y^*+\fr{1}{k}{y_0}^*\right\| = \frac{1}{k}\big\vert k+1-\|{y_0}^*\|\big\vert\leq 2-\frac{\|x^*-y^*\|}{k}\,.
$$
Let us take $v^*:=\frac{{v_0}^*}{\|{v_0}^*\|} \in S_{X^*}$.
Since  $\|\frac{x^*-y_0^*}{k}\|\leq 1$, we have that $\|v_0^*\|\leq 3-\frac{\|x^*-y^*\|}{k}$ and so
$$
\|v^*-{v_0}^*\|=\big\vert1-\|{v_0}^*\|\big\vert = \|{v_0}^*\| - 1 \leq 2-\frac{\|x^*-y^*\|}{k}\,.
$$
Hence, we can estimate as follows:
$$
\|y^*+v^*\|\geq\|y^*+{v_0}^*\|-\|{v_0}^*-v^*\|\geq \frac{\|x^*-y^*\|}{k} - \left(2-\frac{\|x^*-y^*\|}{k}\right)=\frac2k\|x^*-y^*\|-2 \qquad \text{and}
$$
$$
\|y^*-v^*\|\geq\|y^*-{v_0}^*\|-\|{v_0}^*-v^*\|=\left\|(2+\frac1k)y^*-\frac1kx^*\right\|-\|{v_0}^*-v^*\|\geq 2 - \left(2-\frac{\|x^*-y^*\|}{k}\right)=\frac{\|x^*-y^*\|}{k}.
$$
This, together with \eqref{eq:thm-UNS-y^*-v^*}, tells us that
$$
\frac{3}{2k}\|x^*-y^*\|-1\leq \frac12\left(\|y^*+v^*\|+\|y^*-v^*\|\right)\leq2-\tilde \alpha
$$
which gives
$$
\|x^*-y^*\|\leq 2k-\frac23k\tilde\alpha,
$$
finishing the proof in this case.

\emph{Case 2}: $\|{y_0}^*\|\leq 1$. \newline This time let us define  ${v_0}^*=\frac{1}{k}x^*+(1-\frac{1}{k})y^*$ which satisfies
$$
\|{v_0}^*\|\geq\left\vert \frac{1}{k}\|x^*\|-\vert 1 - \frac{1}{k} \vert \|y^*\|\right\vert
= \left\vert \frac{1}{k} - \frac{1}{k} +1 \right\vert = 1.
$$
Using $\|{y_0}^*\|\leq1$, \eqref{eq:Lemma-UNS-|1-y_0^*|small}, and \eqref{eq:Lemma-UNS-|1-y_0^*|big} we can write
$$
\big\vert k-1+\|{y_0}^*\|\big\vert=\big|k-(1-\|y_0^*\|)\big|=k-\big|1-\|y_0^*\|\big|\leq 2k-\|x^*-y^*\|
$$
and, therefore,
$$
\left\|{v_0}^*-\frac{x^*-y_0^*}{k}\right\|=\left\|\fr{k-1}{k}y^*+\fr{1}{k}{y_0}^*\right\|=\frac1k\big\vert k-1+\|{y_0}^*\|\big\vert\leq 2-\frac{\|x^*-y^*\|}{k}\,.
$$
Let us take $ v^*:=\frac{{v_0}^*}{\|{v_0}^*\|} \in S_{X^*} $. Since $\|\frac{x^*-y_0^*}{k}\|\leq 1$, we have that $\|v_0^*\|\leq 3-\frac{\|x^*-y^*\|}{k}$ and so
$$
\|v^*-{v_0}^*\| = \big\vert1-\|{v_0}^*\|\big\vert \leq 2-\frac{\|x^*-y^*\|}{k}\,.
$$
On the one hand, we have that
$$
\|y^*-{v_0}^*\|=\frac{\|x^*-y^*\|}{k}
$$
and hence,
\begin{equation} \label{eq:thm-UNS-2-case}
\|y^*-v^*\|\geq\|y^*-{v_0}^*\|-\|{v_0}^*-v^*\|\geq \frac{2}{k}\|x^*-y^*\|-2 .
\end{equation}
On the other hand, using that $k\leq 1/2$, we can write
$$
 \|y^*+{v_0}^*\|=\frac1k\|x^*+(2k-1)y^*\| \geq  \frac{1}{k}\left(1 - |1 - 2k| \right)= 2
$$
and, therefore,
$$
\|y^*+v^*\| \geq\|y^*+{v_0}^*\|-\|{v_0}^*-v^*\|\geq \frac{\|x^*-y^*\|}{k}\,.
$$
This, together with \eqref{eq:thm-UNS-y^*-v^*} and \eqref{eq:thm-UNS-2-case}, allows us to write
$$
\frac{3}{2k}\|x^*-y^*\|-1\leq \frac12\left(\|y^*+v^*\|+\|y^*-v^*\|\right)\leq2-\tilde \alpha
$$
which again gives
$$
\|x^*-y^*\|\leq 2k-\frac23k\tilde\alpha
$$
and finishes the proof.
\end{proof}

\begin{proof}[Proof of Theorem~\ref{theorem:estimation-for-non-square}] Let $(x,x^*)\in S_X\times S_{X^*}$ with $\re x^*(x)> 1-\delta$ be fixed.
If $\delta \in \left(0, \frac{1}{2} - \fr{1}{6}\tilde\alpha \right)$ we take
$$
k = \sqrt{\frac{\delta}{2- \frac{2}{3}\tilde\alpha}}
$$
which satisfies $k < \frac{1}{2}$ and
$$
2k - \frac{2}{3}k\tilde\alpha =  \frac{\delta}{k} = \sqrt{2\delta}\,\sqrt{1-\frac{\tilde\alpha}{3}}.
$$
Hence, according to Lemma \ref{main-lemma}, we have that
$$
\|x-y\|\leq \sqrt{2\delta}\,\sqrt{1-\frac{\tilde\alpha}{3}}\qquad \text{and} \qquad
\|x^*-y^*\|\leq\sqrt{2\delta}\,\sqrt{1-\frac{\tilde\alpha}{3}}\,.
$$
Taking supremum in $(x,x^*)$ we get the desired inequality.
If otherwise $\delta \in \left( \frac{1}{2} - \fr{1}{6}\tilde\alpha,  \frac{1}{2} \right)$, we apply Lemma \ref{main-lemma} with $k = \frac12$ to obtain
$$
\|x-y\|\leq2\delta \qquad \text{and} \qquad \|x^*-y^*\|\leq 1-\frac13\tilde\alpha<2\delta
$$
which finishes the proof.
\end{proof}

%\newpage

\end{document}